\documentclass[reqno]{amsart}
\usepackage{amsthm,amsmath}
\usepackage{amssymb}
\usepackage[colorlinks]{hyperref}
\usepackage{amsfonts}
\usepackage{color}

    \newtheorem{Def}{Definition}
	\numberwithin{equation}{section}
	\newtheorem{The}{Theorem}[section]
	\newtheorem{Lem}[The]{Lemma}
	\newtheorem{Pro}[The]{Proposition}
	
	\newtheorem{example}[The]{Example}
	\newtheorem{corollary}[The]{Corollary}

\begin{document}
\bibliographystyle{plain}

\title[Fourier integral operators with weighted symbols]{Fourier integral operators with weighted symbols}

\author[O. Elong]{Elong Ouissam }
\author[ A. Senoussaoui, ]
{Senoussaoui Abderrahmane }  

\address{ELONG Ouissam \newline
	Laboratory of Fundamental and Applied Mathematics of Oran
	‘LMFAO’, Department of Mathematics, University of Oran Ahmed Benbella, Oran, Algeria}
\email{elong\_ouissam@yahoo.fr}

\address{SENOUSSAOUI Abderrahmane\newline
	Laboratory of Fundamental and Applied Mathematics of Oran
	‘LMFAO’, Department of Mathematics, University of Oran Ahmed Benbella, Oran, Algeria}
\email{senoussaoui\_abdou@yahoo.fr or senoussaoui.abderahmane@univ-oran.dz}

\date{}
\subjclass[2010]{35S30, 35S05, 47G30}
\keywords{Fourier integral operators, pseudodifferential operators,
	\hfill\break\indent symbol and phase, boundedness and compactness}

\begin{abstract}
	 The paper contains a survey of a class of Fourier integral operators defined by symbols with tempered weight.
	 These operators are bounded (respectively compact) in  $L^2$ if the weight of the amplitude is bounded (respectively tends to $0$).
\end{abstract}

\maketitle
	
	\newcommand{\Om}{\Omega }
	\newcommand{\eps}{\varepsilon}
	\newcommand{\al}{\alpha}
	\newcommand{\lm}{\lambda}
	\newcommand{\ls}{<}
	\newcommand{\gr}{>}
	\newcommand{\ra}{\rightarrow}
	\newcommand{\ov}{\overline}
	\newcommand{\pr}{\partial}
	\newcommand{\wt}{\tilde}
	\newcommand{\n}{\mathbb{N}}
	\newcommand{\rr}{\mathbb{R}}
	\newcommand{\cc}{\mathbb{C}}
	\newcommand{\rn}{\mathbb{R}^n}
	\newcommand{\rN}{\mathbb{R}^N}
	\newcommand{\nn}{\mathbb{N}^n}
	\newcommand{\nN}{\mathbb{N}^N}
	\newcommand{\s}{\mathcal{S}}
	\newcommand{\ps}{\mathcal{S^{\prime}}}
	\newcommand{\lp}{L^p}
	\newcommand{\dd}{\mathcal{D}}
	\newcommand{\f}{\times}

\section{Introduction}
	A Fourier integral operator or FIO for short has the following form
	\begin{equation}\label{eq1}
	[I(a,\phi)f](x)=\underset{\rn_y\f\rN_\theta}\iint e^{i\phi(x,y,\theta)}a(x,y,\theta)f(y)\,dy\,d\theta,\;\;\;\; f\in\s(\rn)
	\end{equation} where $\phi$ is called the \textit{phase function} and $a$ is the \textit{symbol} of the FIO $I(a,\phi)$.
	
	The study of FIO was started by considering the well known class of symbols $S^m_{\rho,\delta}$ introduced by H\"ormander which consists of functions $a(x,\theta)\in C^\infty(\rn\f\rN)$ that satisfy $$|\pr^\al_\theta\, \pr^\beta_x a(x,\theta)\leq C_{\al,\beta}(1+|\theta|)^{m-\rho|\al|+\delta|\beta|},$$ with $m\in\rr,\; 0\leq\rho,\delta\leq1$. For the phase function one usually assumes that $\phi(x,\theta)\in C^\infty(\rn\f\rN\setminus0)$ is positively homogeneous of degree $1$ with respect to $\theta$ and $\phi$ does not have critical points for $\theta\neq0$.
	
	Later on, other classes of symbols and phase functions were studied. In  (\cite{Helffer}) and \cite{Robert}, D. Robert  and B. Helffer treated the symbol class $\Gamma^\mu_\rho(\Om)$ that consists of elements $a\in C^\infty(\Om)$ such that for any multi-indices $(\al,\beta,\gamma)\in \nn\f\nn\f\nN,$ there exists  $C_{\al,\beta,\gamma}>0,$ $$|\pr^\al_x\pr^\beta_y\, \pr^\gamma_\theta a(x,y,\theta)\leq C_{\al,\beta,\gamma}\lm^{\mu-\rho(|\al|+|\beta|+|\gamma|)}(x,y,\theta),$$ where $\Om$ is an open set of $\rn\f\rn\f\rN,\, \mu\in \rr$ and $\rho\in[0,1]$ and they considered phase functions satisfying certain properties. In (\cite{Me-Se}), Messirdi and Senoussaoui treated	the $L^2$ boundedness and $L^2$ compactness of FIO with symbol class just defined. These operators are continuous (respectively compact) in  $L^2$ if the weight of the symbol is bounded (respectively tends to $0$). Noted that in H\"ormander's class this result is not true in general. In fact, in (\cite{Se-un_FIO}) the author gave an example of FIO with symbol belonging to $\underset{0<\rho<1}\bigcap S^0_{\rho,1}$ that cannot be extended as a bounded operator on $L^2(\rn)$.
	
	The aim of this work is to extend results obtained in (\cite{Me-Se}), we save hypothesis on the phase function but we consider symbols with weight $(m,\rho)$ (see below).
	
	So in the second section we define symbol and phase functions used in this paper and we give the sense of the integral (\ref{eq1}) by using the known oscillatory integral method developed by H\"ormander.  A special case of phase functions treated here is discussed in the preliminaries, in the third section. The last section is devoted to treat the $L^2$ boundedness and $L^2$ compactness of FIO.

\section{Preliminaries}
  \begin{Def}
  	A continuous function $m:\mathbb{R}^{n}\rightarrow[0,+\infty[ $ is called  a tempered weight on  $\mathbb{R}^{n}$ if   	
  	$$\exists C_{0}>0,\text{ }\exists l\in \mathbb{R}\;;\text{ }m(x) \leq C_{0}\,m\left( x_{1}\right) \left( 1+\left\vert x_{1}-x\right\vert	\right) ^{l},\;\forall x,x_{1}\in \mathbb{R}^{n}.$$
  	
  	\end{Def}
    	Functions of the form $\lambda ^{p}(x)=( 1+|x|) ^{p},$ $p\in \mathbb{R}{,}$ define tempered weights.
  	\begin{Def}
  		Let $\Om $ be an open set in $\mathbb{R}^{n}$, $\rho \in[0,1]$ and $m$ a tempered weight. A function $a\in C^{\infty }\left( \Om \right)$ is called symbol with weight $(m,\rho )$ or $(m,\rho )$-weighted symbol on $\Om$ if   		  		
  		$$\forall \alpha \in \n^{n},\exists C_{\al}>0;\;\;\left\vert\pr _{x}^{\al}a\left( x\right) \right\vert \leq C_{\al}m\left(x\right) \left( 1+\left\vert x\right\vert \right) ^{-\rho \left\vert \al\right\vert },\;\forall x\in \Om.$$
  		\end{Def}
  		We note  $S_{\rho }^{m}(\Om) $ the space of symbols with weight $(m,\rho )$.
  	\begin{Pro} Let $m$ and $l$ be two tempered weights.
  		\begin{enumerate}
  				\item[($i$)] If $a\in S^m_\rho$ then $\pr ^\al_x\pr^\beta_\theta a\in S^{m\lm^{-\rho|\al+\beta|}}_\rho$;
  				\item[($ii$)] If $a\in S^m_\rho$ and $b\in S^l_\rho$ then $ab\in S^{ml}_\rho$;
  				\item[($iii$)] If $\rho\leq \delta$, $S^m_\delta\subset S^m_\rho$;
  				\item[($iv$)] Let $a\in S^m_\rho$. If there exists $C_0\gr0$ and $\mu\in \rr$ such that $|a|\geq C_0 \lm^{\mu}$ uniformly on $\Om$ then $\frac{1}{a}\in S^{m\lm^{-2\mu}}_\rho$.\\  				
  		\end{enumerate}
  	\end{Pro}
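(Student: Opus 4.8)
The plan is to prove the four assertions separately: (i)--(iii) follow from direct differentiation together with the stability of the class of tempered weights under products and under multiplication by powers of $\lm$, and (iv) will be an induction on the order of differentiation, starting from the relation $a\cdot\frac1a\equiv1$. I would first record the two facts about weights used throughout: if $m$ and $l$ are tempered weights, then so are $ml$ and $m\lm^{s}$ for every $s\in\rr$; both follow directly from the defining inequality and the elementary estimate $1+|x|\le(1+|x_1|)(1+|x_1-x|)$ (which also gives $(1+|x|)^{s}\le(1+|x_1|)^{s}(1+|x_1-x|)^{|s|}$). Granting this, (i) is immediate: for $a\in S^m_\rho$, a further derivative $\pr^{\delta}_x\pr^{\eps}_\theta$ applied to $\pr^\al_x\pr^\beta_\theta a$ equals $\pr^{\al+\delta}_x\pr^{\beta+\eps}_\theta a$, hence is bounded by $C\,m\,\lm^{-\rho|\al+\beta+\delta+\eps|}=C\,(m\lm^{-\rho|\al+\beta|})\,\lm^{-\rho|\delta+\eps|}$, which is exactly the symbol estimate for the weight $m\lm^{-\rho|\al+\beta|}$. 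For (ii) I would apply the Leibniz rule to $\pr^\al_x(ab)$ and bound each summand $\binom\al\gamma\,\pr^\gamma_x a\,\pr^{\al-\gamma}_x b$ by $C\,m\,l\,\lm^{-\rho|\gamma|}\lm^{-\rho|\al-\gamma|}=C\,(ml)\,\lm^{-\rho|\al|}$. Assertion (iii) is the remark that $\lm=1+|x|\ge1$, so $\lm^{-\delta|\al|}\le\lm^{-\rho|\al|}$ whenever $\rho\le\delta$, and hence every symbol estimate with exponent $\delta$ implies the one with exponent $\rho$.

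For (iv), I would first get the order-zero estimate: taking $\al=0$ in the definition, $a\in S^m_\rho$ gives $|a|\le C'm$, and combined with $|a|\ge C_0\lm^{\mu}$ this yields $\lm^{\mu}\le (C'/C_0)\,m$, hence $\lm^{-\mu}\le(C'/C_0)\,m\lm^{-2\mu}$; therefore $|a|^{-1}\le C_0^{-1}\lm^{-\mu}\le C\,m\lm^{-2\mu}$, and $m\lm^{-2\mu}$ is a tempered weight by the facts above. For higher derivatives I would differentiate $a\cdot\frac1a\equiv1$ to obtain
\[
\pr^\al_x\Big(\frac1a\Big)=-\frac1a\sum_{0<\gamma\le\al}\binom\al\gamma\,\big(\pr^\gamma_x a\big)\,\pr^{\al-\gamma}_x\Big(\frac1a\Big),
\]
and induct on $|\al|$; equivalently, iterating this relation expresses $\pr^\al_x(1/a)$ as a finite linear combination of terms $a^{-(k+1)}\prod_{j=1}^{k}\pr^{\gamma_j}_x a$ with $\sum_j\gamma_j=\al$, $|\gamma_j|\ge1$ and $1\le k\le|\al|$, each of which is bounded by $|a|^{-(k+1)}\prod_j|\pr^{\gamma_j}_x a|\le C\,C_0^{-(k+1)}\,\lm^{-(k+1)\mu}\,m^{k}\,\lm^{-\rho|\al|}$.

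The combinatorics and the weight-stability verifications are routine; the step I expect to be the main obstacle is showing that \emph{every} term above lies under the single target weight $m\,\lm^{-2\mu}\,\lm^{-\rho|\al|}$. The principal term, $k=1$, is harmless: it is $\le C\,C_0^{-2}\,\lm^{-2\mu}\,m\,\lm^{-\rho|\al|}$, exactly the desired bound. The terms with $k\ge2$ carry extra factors of $\frac1a$ and of the derivatives $\pr^{\gamma_j}_x a$, and recasting them in the form $C_\al\,m\,\lm^{-2\mu}\,\lm^{-\rho|\al|}$ is where one must use the ellipticity hypothesis $|a|\ge C_0\lm^{\mu}$ together with $\lm^{\mu}\le C m$, keeping precise track of the powers of $m$ and of $\lm$. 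Once this accounting is carried through, the induction closes with $\al$-dependent constants, which proves (iv).
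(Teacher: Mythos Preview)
Your treatment of (i)--(iii) is correct and is precisely the Leibniz-formula argument the paper invokes. For (iv) you also follow the paper's suggested induction, and you correctly isolate the only nontrivial point: after expanding $\pr^{\al}(1/a)$, the term of depth $k$ is bounded by $C\,\lm^{-(k+1)\mu}\,m^{k}\,\lm^{-\rho|\al|}$, and one must absorb this into $C\,(m\lm^{-2\mu})\,\lm^{-\rho|\al|}$.

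The gap is in your last sentence. The inequality you plan to use, $\lm^{\mu}\le Cm$, points the wrong way: passing from $\lm^{-(k+1)\mu}m^{k}$ to $m\lm^{-2\mu}$ requires $(m\lm^{-\mu})^{k-1}\le C$, i.e.\ $m\le C\lm^{\mu}$, which is \emph{not} implied by the hypotheses. Equivalently, one pass of the induction gives
\[
|\pr^{\al}(1/a)|\le C\,(m\lm^{-2\mu})\,\lm^{-\rho|\al|}\cdot(m\lm^{-\mu}),
\]
and the extra factor $m\lm^{-\mu}$ is only known to be bounded \emph{below}. So the accounting does not close. In fact (iv) as stated seems to need the stronger ellipticity $m\le C\lm^{\mu}$ (equivalently $|a|\ge c\,m$): without it one can take $\rho=0$, $m=\lm^{4}$, $\mu=0$, and $a(x)=1+x^{4}\bigl(1-\sum_j\phi(x-N_j)\bigr)\in S^{\lm^{4}}_{0}(\rr)$, where $\phi$ is an even bump with $\phi(0)=1$, $\phi''(0)<0$ and the $N_j\to\infty$ have disjoint supports; then $a\ge1$, yet $\pr^{3}(1/a)$ has size $\sim N_j^{6}$ near $N_j$ while $\lm^{4}\sim N_j^{4}$, so $1/a\notin S^{\lm^{4}}_{0}$. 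The paper's one-line ``by induction'' glosses over exactly this point; your instinct that the $k\ge2$ terms are the obstacle was right, but the obstacle is genuine and cannot be removed without an extra hypothesis.
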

  	\begin{proof}
  		For the proof we use Leibniz formula.
  		($ii$) is obtained by Leibniz formula and by induction we prove ($iv$).
  	\end{proof}			
 	Now, we consider the class of Fourier integral operators
  	\begin{equation}\label{fio}
  			[I(a,\phi)f](x)=\underset{\rn_y\f\rN_\theta}\iint e^{i\phi(x,y,\theta)}a(x,y,\theta)f(y)\,dy\,\widehat{d\theta },\;\;\;\; f\in\s(\rn)
  	\end{equation}
  	where $\widehat{d\theta }=(2\pi )^{-n}\,d\theta$, $a\in S^m_\rho$ and $\phi$ be a phase function which satisfies the following hypothesis
  		\begin{enumerate}
  			\item[(H1)]  $\phi \in C^{\infty }(\rr_{x}^n\times \rr_{y}^n\times \rr_{\theta }^{N},\mathbb{R})$ 	 ($\phi$ is a real function);                					
  			\item[(H2)]  For all $(\alpha ,\beta ,\gamma )\in \mathbb{N}^n\times \mathbb{N}^n\times \mathbb{N}^{N}$, there exists $C_{\alpha ,\beta,\gamma }>0$ such that
  				\begin{equation*}
  					|\pr_{x}^{\al}\pr _{y }^{\beta}\pr _{\theta}^{\gamma }\phi (x,y,\theta)|
  					\leq C_{\al,\beta ,\gamma }\lambda ^{(2-|\alpha | -|\beta |-|\gamma | )}(x,y,\theta);
  				\end{equation*}
  			\item[(H3)] There exists $K_{1},K_2>0$ such that
  				\begin{equation*}
  					K_{1}\lm (x,y,\theta)\leq \lm (\pr _{y}\phi ,\pr _{\theta }\phi ,y)\leq K_2\lm(x,y,\theta),\quad\forall (x,y,\theta)\in \rr_{x}^n\times \rr_{y}^n\times\rr_{\theta }^{N};
  				\end{equation*}
  			\item[(H3$^*$)] There exists $K_{1}^*,K_2^*>0$ such that
  			\begin{equation*}
  			K_{1}^*\lm (x,y,\theta)\leq \lm (x,\pr_\theta\phi,\pr_x\phi)\leq K_2^*\lm(x,y,\theta),\quad\forall (x,y,\theta)\in \rr_{x}^n\times \rr_{y}^n\times\rr_{\theta }^{N}.
  			\end{equation*}
  		\end{enumerate}

  	To give a meaning to the right hand side of (\ref{fio})	 we use the oscillatory integral method. So we consider $g\in \s(\rr_{x}^n\times \rr_{y}^n\times\rr_{\theta }^{N})$ such that $g(0)=1$. Let $a\in S^m_0$, we define $$a_\sigma(x,y,\theta)=g(\frac{x}{\sigma},\frac{y}{\sigma},\frac{\theta}{\sigma})\, a(x,y,\theta),\quad \sigma>0.$$
  	\begin{The}\label{existence}
  		Let $a\in S^m_0(\rn)$ and $\phi$ be a phase function which satisfies $(H1)-(H3)$. Then 
  		\begin{enumerate}
  			\item For all $f\in \mathcal{S}(\mathbb{R}^n)$,
  			$\underset{\sigma \to +\infty }\lim [ I(a_{\sigma },\phi)f] (x)$
  			exists for every point
  			$x\in \mathbb{R}^n$ and is independent of the choice of
  			the function $g$. We define
  			\begin{equation*}
  			[I(a,\phi )f ](x):=\lim_{\sigma \to +\infty}
  			[I(a_{\sigma },\phi )f ](x);
  			\end{equation*}
  			\item $I(a,\phi)$ defines a linear continuous operator on $\s(\rn)$ and $\s'(\rn)$ respectively.
  
  		\end{enumerate}
  	\end{The}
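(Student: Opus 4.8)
The plan is to give a rigorous meaning to the integral (\ref{fio}) through H\"ormander's oscillatory-integral (non-stationary phase) device, and then to read off the continuity statements of part (2) directly from the uniform estimates produced by that device. Concretely, I would show that the regularized integrals $[I(a_\sigma,\phi)f](x)$ can, after repeated integration by parts against the phase, be rewritten as genuinely absolutely convergent integrals whose integrand is dominated --- uniformly in $\sigma\ge1$ and with suitable decay in $x$ --- by an $L^1$ function of $(y,\theta)$; dominated convergence then yields the limit, its independence of $g$, and the required seminorm bounds all at once.

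First some harmless reductions. For $\sigma\ge1$ the derivatives of $g(\cdot/\sigma)$ carry factors $\sigma^{-|\cdot|}\le1$, so $a_\sigma\in S^m_0$ with seminorms bounded uniformly in $\sigma$; moreover $a_\sigma\to a$ as $\sigma\to+\infty$ together with all derivatives, uniformly on compact sets, and each $a_\sigma$ is rapidly decreasing in $(y,\theta)$ so that $[I(a_\sigma,\phi)f](x)$ is a priori an absolutely convergent integral. Since a tempered weight obeys $m(x,y,\theta)\le C\lm^{l}(x,y,\theta)$ for some fixed $l$, and $f\in\s(\rn)$, one has for every multi-index $\beta$ and every $M$
$$|\pr^{\beta}_{y,\theta}(a_\sigma f)(x,y,\theta)|\le C_{\beta,M}\,\lm^{l}(x,y,\theta)\,\lm^{-M}(y),$$
with $C_{\beta,M}$ controlled by finitely many Schwartz seminorms of $f$ and independent of $\sigma\ge1$.

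Now the core construction. Set $\Lambda=1+|\pr_y\phi|^2+|\pr_\theta\phi|^2$ and
$$L=\frac{1}{\Lambda}\Bigl(1-i\sum_{j}\pr_{y_j}\phi\,\pr_{y_j}-i\sum_{k}\pr_{\theta_k}\phi\,\pr_{\theta_k}\Bigr),$$
so that $Le^{i\phi}=e^{i\phi}$ and $L$, together with its transpose ${}^tL$ (a first-order operator in $(y,\theta)$), has smooth coefficients because $\Lambda\ge1$. Hypothesis (H3) gives $|\pr_y\phi|+|\pr_\theta\phi|\gtrsim\lm(x,y,\theta)$ --- hence $\Lambda\gtrsim\lm^2$ --- on the set $\{|y|\le\tfrac{K_1}{2}\lm(x,y,\theta)\}\cap\{\lm(x,y,\theta)\ge R_0\}$ for a suitable $R_0$; on that set (H2) lets one check that the coefficient of the order-$j$ part of $({}^tL)^k$ is $O(\lm^{-2k+j})$ with all $(y,\theta)$-derivatives controlled, so each application of ${}^tL$ buys one power of $\lm^{-1}$ in the estimate above. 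I would then fix a smooth partition of unity $1=\omega_1+\omega_2+\omega_3$, with $|\pr^{\al}\omega_i|\lesssim\lm^{-|\al|}$, where $\omega_1$ is supported in the fixed compact set $\{\lm\le R_0\}$, $\omega_2$ in $\{|y|\ge\tfrac{K_1}{4}\lm\}$, and $\omega_3$ in $\{|y|\le\tfrac{K_1}{2}\lm,\ \lm\ge\tfrac{R_0}{2}\}$, and split $[I(a_\sigma,\phi)f](x)=\sum_i\iint e^{i\phi}\omega_i a_\sigma f\,dy\,\widehat{d\theta}$. The $\omega_1$-term vanishes for $|x|>R_0$ and is otherwise bounded; on $\mathrm{supp}\,\omega_2$ one has $|x|+|\theta|\lesssim|y|$, so the rapid decay of $f$ makes that term absolutely convergent with any power decay in $(1+|x|)$; and for the $\omega_3$-term one integrates by parts $k$ times,
$$\iint e^{i\phi}\omega_3 a_\sigma f\,dy\,\widehat{d\theta}=\iint e^{i\phi}\,({}^tL)^{k}(\omega_3 a_\sigma f)\,dy\,\widehat{d\theta},$$
which is legitimate because $\omega_3 a_\sigma f$ is smooth and rapidly decaying in $(y,\theta)$, and on $\mathrm{supp}\,\omega_3$ the new integrand is bounded by $C_{k,M}\,\lm^{l-k}(x,y,\theta)\,\lm^{-M}(y)$, uniformly in $\sigma\ge1$. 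Using $\lm(x,y,\theta)\ge1+|\theta|$ and $\lm(x,y,\theta)\ge1+|x|$, one picks $M>n$ and $k>l+N+M'$ to make this dominating function integrable in $(y,\theta)$ with an extra factor $\lm^{-M'}(x)$. Dominated convergence in each of the three pieces gives $\lim_{\sigma}[I(a_\sigma,\phi)f](x)$; since the dominating functions do not involve $g$ and $a_\sigma\to a$ with all derivatives, the limit is independent of $g$, proving (1). Applying the same scheme to $\pr_x^{\al}(e^{i\phi}a_\sigma)$ --- which by (H2) and the product rule equals $e^{i\phi}\tilde a_\sigma^{(\al)}$ with $\tilde a_\sigma^{(\al)}\in S^{m\lm^{|\al|}}_0$, controlled by $\lm^{l+|\al|}$, so one takes $k>l+|\al|+N+M'$ --- and using that the convergence is locally uniform with all $x$-derivatives, one obtains $\pr_x^{\al}[I(a,\phi)f](x)=O(\lm^{-M'}(x))$ with constant a Schwartz seminorm of $f$: this is the continuity of $I(a,\phi):\s(\rn)\to\s(\rn)$. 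For $\s'(\rn)$, note that the formal transpose ${}^tI(a,\phi):h\mapsto\iint e^{i\phi(x,y,\theta)}a(x,y,\theta)h(x)\,dx\,\widehat{d\theta}$ is again of the form (\ref{fio}) with $x$ and $y$ interchanged, the role of (H3) now being played by (H3$^*$); hence it maps $\s\to\s$ continuously by the above, and $I(a,\phi)$ extends by transposition to a continuous operator on $\s'(\rn)$.

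The only genuinely delicate point, and the main obstacle, is the bookkeeping in the core construction: showing that on the non-degeneracy region provided by (H3) every application of ${}^tL$ really gains a factor $\lm^{-1}$ --- with all derivatives of the coefficients $\Lambda^{-1}\pr_{y,\theta}\phi$ estimated through (H2) --- and that all the constants are independent of the regularizing parameter $\sigma$, so that dominated convergence applies. Once that estimate is in hand, the compact piece, the $f$-decay piece, differentiation under the integral sign, and the duality step are all routine.
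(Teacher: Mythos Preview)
Your argument is correct and follows the same oscillatory-integral strategy as the paper, but the decomposition is organized differently. The paper partitions by the size of the phase gradient, setting $\omega_{\eps_0}=\chi\bigl((|\nabla_y\phi|^2+|\nabla_\theta\phi|^2)/(\eps_0\lm^2)\bigr)$: on $\mathrm{supp}\,\omega_{\eps_0}$ hypothesis (H3) forces $\lm\lesssim|y|$, so the Schwartz decay of $f$ already gives absolute convergence; on $\mathrm{supp}\,(1-\omega_{\eps_0})$ one integrates by parts with $L=\tfrac{1}{i(|\nabla_y\phi|^2+|\nabla_\theta\phi|^2)}(\nabla_y\phi\cdot\nabla_y+\nabla_\theta\phi\cdot\nabla_\theta)$, each application gaining $\lm^{-1}$. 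You partition instead by the size of $|y|$ relative to $\lm$ (plus an explicit compact piece), and your operator carries a harmless $+1$ in the denominator so that its coefficients are globally smooth. These are dual bookkeepings of the same mechanism, since (H3) says precisely that small $|\nabla_{y,\theta}\phi|$ forces $|y|\gtrsim\lm$: the paper's ``small-gradient'' region is your $\omega_2$-region and its ``large-gradient'' region is your $\omega_3$-region, up to a compact remainder. Your treatment of part~(2) is also more explicit than the paper's: you spell out the $x$-differentiation and the polynomial decay in $x$ that give continuity on $\s(\rn)$, and you invoke (H3$^*$) for the transpose to obtain the $\s'(\rn)$ extension by duality, whereas the paper only remarks that the integration-by-parts lemma is reused.
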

  						  	
  	\begin{proof}  						
  		Let $\chi\in C^\infty_{0}(\rr)$, $supp\, \chi\subset [-2,2]$ such that $\chi\equiv1$ in $[-1,1]$. For $\eps\gr0$, put $$\omega_\eps(x,y,\theta)=\chi\left(\frac{|\nabla_y\phi|^2+|\nabla_\theta\phi|^2}{\eps\, \lm^2(x,y,\theta)}\right). $$  						
  		$\bullet$ In $supp\,\omega_\eps$, $|\nabla_y\phi|^2+|\nabla_\theta\phi|^2\leq 2\eps\, \lm^2(x,y,\theta)$.
  		Using $(H3)$ we have $$K_1^2\, \lm^2(x,y,\theta)\leq 2\eps \lm^2(x,y,\theta)+|y|^2$$
  	    So for $\eps$ sufficiently small, fixed at value $\eps_0$, we obtain
  		\begin{equation}\label{majoration}
  		\lm^2(x,y,\theta)\leq C(\eps) |y|^2,\;\;\; \forall(x,y,\theta)\in supp\,\omega_{\eps_0}.
  		\end{equation}
  		Consequently \begin{eqnarray*}
  			|I(\omega_{\eps_0}a_\sigma,\phi)f(x)| & \leq &\underset{\rn_y\times\rN_\theta}\iint  |a_\sigma(x,y,\theta)| \, |f(y)| \,dy\, \widehat{d\theta }\\
  							     & \leq &\underset{\rn_y\times\rN_\theta}\iint  m(x,y,\theta)\, |f(y)| \,dy\, \widehat{d\theta }
  				              \end{eqnarray*}
  		Using the definition of the tempered weight, there exists $ C\gr0$ and $l\in \rr$ such that  
  		$$|I(\omega_{\eps_0}a_\sigma,\phi)f(x)| \leq C\, m(0,0,0)\underset{\rn_y\times\rN_\theta}\iint  \lm^l(x,y,\theta)\, |f(y)| dy\, \widehat{d\theta }.$$  	 
  	    and since $f\in \s(\rn)$, we deduce that $I(\omega_{\eps_0}a_\sigma,\phi)f $ is absolutely convergent on $supp\,\omega_\eps$. By the Lebesgue's dominated convergence theorem we can see easily that $$	I(\omega_{\eps_0}a,\phi )f =\lim_{\sigma \to +\infty}
  	    I(\omega_{\eps_0}a_{\sigma },\phi )f .$$ 
  							
  		$\bullet$ In $supp\,(1-\omega_{\eps_0})$, we have
  		$$  supp\,(1-\omega_{\eps_0})\subset\Om_0=\{(x,y,\theta):|\nabla_y\phi|^2+|\nabla_\theta\phi|^2\geq \eps_0\, \lm^2(x,y,\theta)  \}$$
  		Consider the differential operator 
  			$$L=\frac{1}{i(|\nabla_y\phi|^2+|\nabla_\theta\phi|^2 )}\left(  \sum_{j=1}^{n}\frac{\pr\phi}{\pr y_j}\frac{\pr}{\pr y_j}+\sum_{k=1}^{N}\frac{\pr\phi}{\pr \theta_k}\frac{\pr }{\pr \theta_k}\right).$$
  		A basic calculus shows that $L \; e^{i\phi}=e^{i\phi}$.
  							
   \begin{Lem}\label{lem1}
  		For any $b\in C^\infty(\rn_y\times\rN_\theta)$ and any $k\in \n$ we have
  		$$(^tL)^k[(1-\omega_{\eps_0})b]=\sum_{|\al|+|\beta|\leq k} g^{(k)}_{\al\beta}\pr^\al_y\pr^\beta_\theta((1-\omega_{\eps_0})b)$$ where $^tL$ is the transpose operator of $L$ and $g^{(k)}_{\al,\beta}\in S^{\lm^{-k}}_0(\Om_0)$.
  \end{Lem}
  								
  \begin{proof}
  		 The transpose operator $^tL$ has the following form
  				$$^tL=\sum_{j=1}^{n}F_j\,\frac{\pr}{\pr y_j}+\sum_{j=1}^{N}G_j\,\frac{\pr}{\pr \theta_j}+H,$$
  	  where \begin{eqnarray*}
  				F_j&=&-\frac{1}{i(|\nabla_y\phi|^2+|\nabla_\theta\phi|^2 )}\frac{\pr\phi}{\pr y_j},\\
  				G_j&=&-\frac{1}{i(|\nabla_y\phi|^2+|\nabla_\theta\phi|^2 )}\frac{\pr\phi}{\pr\theta_j},\\
  					H&=&-\frac{1}{i(|\nabla_y\phi|^2+|\nabla_\theta\phi|^2 )}\left( \sum_{j=1}^{n}\frac{\pr^2\phi}{\pr y_j^2}+\sum_{k=1}^{N}\frac{\pr^2\phi}{\pr \theta_k^2}\right). \\					
  		\end{eqnarray*}
  	 In $\Om_0$, $|\nabla_y\phi|^2+|\nabla_\theta\phi|^2\geq \eps_0\, \lm^2(x,y,\theta)  \}$ therefore using hypothesis $(H2)$ we find
  	  \begin{center}
	  	$F_j,G_j\in S^{\lm^{-1}}_0(\Om_0)$ and $H\in S^{\lm^{-2}}_0(\Om_0)$.
  	\end{center}
  	The lemma is deduced by induction on $k$. 
  \end{proof}
  \begin{equation}\label{int}
  			I((1-\omega_{\eps_0})a_\sigma,\phi)f(x)=\underset{\rn_y\times\rN_\theta}\iint  e^{i\phi(x,y,\theta)}(^tL)^k[(1-\omega_{\eps_0})\,a_\sigma f(y)]\, dy\, \widehat{d\theta }.
 \end{equation}
  Consequently, for $k$ large enough, the integral (\ref{int}) converges when $\sigma\rightarrow0$ to the absolutely convergent integral $$\underset{\rn_y\times\rN_\theta}\iint  e^{i\phi(x,y,\theta)}(^tL)^k[(1-\omega_{\eps_0})\,a f(y)]\, dy\, \widehat{d\theta }.$$
  To prove the second part, we use again the lemma (\ref{lem1}).
 \end{proof}

\section{Preliminaries}
 In the sequel we study the special phase function 
		\begin{equation}
			\phi(x,y,\theta)=S(x,\theta)-y\theta.\label{phase_S}
		\end{equation}
	where $S$ satisfies
	\begin{itemize}
		\item[(G1)] $S\in C^{\infty }(\rn_{x}\times
		\rn_{\theta },\rr)$,
		
		\item[(G2)] There exists $\delta _{0}>0$ such that
		\begin{equation*}
		\inf_{x,\theta \in \mathbb{R}^n}|\det \frac{\partial ^2S}{
			\partial x\partial \theta }(x,\theta )| \geq \delta
		_{0}.
		\end{equation*}
		
		\item[(G3)] For all $(\alpha ,\beta )\in \mathbb{N}
		^n\times \mathbb{N}^n$, there exist
		$ C_{\alpha ,\beta }>0$, such that
		\[
		|\partial _{x}^{\alpha }\partial _{\theta }^{\beta
		}S(x,\theta )| \leq C_{\alpha ,\beta }\lambda (x,\theta
		)^{(2-|\alpha | -|\beta |
			)}.
		\]
	\end{itemize}
	
\begin{Lem}
	If $S$ satisfies (G1), (G2)  and (G3), then
	$S$ satisfies (H1), (H2) and (H3). Also
	there exists $ C >0$ such that for all
	$x,x',\theta\in \rn$,
	\begin{equation}
	|x-x'| \leq C|(\partial _{\theta }S)(x,\theta )-(\partial _{\theta
	}S)(x',\theta )| .  \label{3.3}
	\end{equation}
\end{Lem}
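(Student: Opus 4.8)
The plan is to verify (H1)--(H3) for $\phi(x,y,\theta)=S(x,\theta)-y\theta$ by direct computation, to extract (\ref{3.3}) from a uniform global inversion of the partial map $x\mapsto(\pr_\theta S)(x,\theta)$, and then to deduce (H3) from (\ref{3.3}). Property (H1) is immediate, since $S\in C^\infty$ by (G1) and $(x,y,\theta)\mapsto y\theta$ is a polynomial. For (H2) I would differentiate $\phi=S(x,\theta)-y\theta$ and treat the two summands separately. The derivative $\pr^\al_x\pr^\beta_y\pr^\gamma_\theta(y\theta)$ is nonzero only in finitely many cases, where it equals one of $y\theta$, $y_i$, $\theta_j$ or a constant, each bounded by $\lm^{2-|\al|-|\beta|-|\gamma|}(x,y,\theta)$ because $|y|,|\theta|\leq\lm(x,y,\theta)$ and $\lm\geq1$. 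The derivative $\pr^\al_x\pr^\beta_y\pr^\gamma_\theta S$ vanishes unless $\beta=0$, and then (G3) gives $|\pr^\al_x\pr^\gamma_\theta S(x,\theta)|\leq C_{\al,\gamma}\lm^{2-|\al|-|\gamma|}(x,\theta)$, and one concludes using $\lm(x,\theta)\leq\lm(x,y,\theta)$. Adding the two contributions gives (H2).

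The substantive step is (\ref{3.3}). Fix $\theta$ and put $G_\theta(x)=(\pr_\theta S)(x,\theta)$, a $C^\infty$ map $\rn\to\rn$ with differential $DG_\theta(x)=(\pr_x\pr_\theta S)(x,\theta)$. By (G3) applied with one $x$- and one $\theta$-derivative, the entries of $DG_\theta(x)$ are bounded by a constant independent of $x$ and $\theta$; by (G2), $|\det DG_\theta(x)|\geq\delta_0$; hence, via $A^{-1}=(\det A)^{-1}\,\mathrm{adj}\,A$, there is a bound $\|(DG_\theta(x))^{-1}\|\leq C_1$ uniform in $x$ and $\theta$. In particular each $G_\theta$ is a local diffeomorphism, and this uniform control of the inverse differential is exactly the hypothesis needed for the Hadamard--L\'evy global inversion theorem, which then guarantees that $G_\theta:\rn\to\rn$ is a $C^\infty$ diffeomorphism. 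Consequently $G_\theta^{-1}$ is defined and $C^1$ on all of $\rn$ with $\|D(G_\theta^{-1})\|\leq C_1$, hence $C_1$-Lipschitz on the convex set $\rn$; evaluating the Lipschitz bound at $G_\theta(x)$ and $G_\theta(x')$ gives $|x-x'|\leq C_1\,|(\pr_\theta S)(x,\theta)-(\pr_\theta S)(x',\theta)|$, which is (\ref{3.3}) with $C=C_1$ independent of $\theta$.

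Finally, (H3) follows from (\ref{3.3}). For $\phi=S(x,\theta)-y\theta$ one has $\nabla_y\phi=-\theta$ and $\nabla_\theta\phi=(\nabla_\theta S)(x,\theta)-y$, so
\[
\lm(\pr_y\phi,\pr_\theta\phi,y)=1+\big(|\theta|^2+|(\nabla_\theta S)(x,\theta)-y|^2+|y|^2\big)^{1/2}.
\]
For the upper bound, (G3) gives $|(\nabla_\theta S)(x,\theta)|\leq C\lm(x,\theta)\leq C\lm(x,y,\theta)$, so $|(\nabla_\theta S)(x,\theta)-y|\leq C'\lm(x,y,\theta)$, and together with $|\theta|,|y|\leq\lm(x,y,\theta)$ this yields $\lm(\pr_y\phi,\pr_\theta\phi,y)\leq K_2\lm(x,y,\theta)$. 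For the lower bound, (\ref{3.3}) with $x'=0$ gives $|x|\leq C|(\nabla_\theta S)(x,\theta)-y|+C|y|+C|(\nabla_\theta S)(0,\theta)|$, and $|(\nabla_\theta S)(0,\theta)|\leq C\lm(0,\theta)\leq C(1+|\theta|)$ by (G3); squaring and adding $|y|^2+|\theta|^2$ to both sides yields $|x|^2+|y|^2+|\theta|^2\leq C''\big(|\theta|^2+|(\nabla_\theta S)(x,\theta)-y|^2+|y|^2+1\big)$, which rearranges to $\lm(x,y,\theta)\leq K_1^{-1}\lm(\pr_y\phi,\pr_\theta\phi,y)$. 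This proves (H3).

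The main obstacle is the passage from the infinitesimal bound $\|(DG_\theta)^{-1}\|\leq C_1$ to the global Lipschitz estimate (\ref{3.3}): local invertibility of $G_\theta$ does not by itself produce a global diffeomorphism, so one must invoke a global inversion theorem (or run the underlying covering-space / path-lifting argument, in which the uniform bound on $(DG_\theta)^{-1}$ is precisely what prevents a lifted path from escaping to infinity and, at the same time, keeps the constant $C$ independent of $\theta$). Everything else reduces to bookkeeping with (G2), (G3) and the elementary comparisons between $\lm(x,\theta)$ and $\lm(x,y,\theta)$.
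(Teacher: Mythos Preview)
The paper states this lemma without proof, so there is nothing to compare your argument against directly. Your treatment of (H1), of the key inequality \eqref{3.3}, and of (H3) is correct and well organized. In particular, the passage from the pointwise bound $\|(\pr_x\pr_\theta S)^{-1}\|\le C_1$ (obtained from (G2), (G3) via the adjugate formula) to the global bi-Lipschitz estimate \eqref{3.3} does require a global-inversion argument of Hadamard--L\'evy type; you are right that local invertibility alone is insufficient, and your identification of this as the only substantive step is accurate.

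There is, however, a genuine gap in your verification of (H2). You write that for $\beta=0$ one has $|\pr_x^\al\pr_\theta^\gamma S(x,\theta)|\le C_{\al,\gamma}\lm^{\,2-|\al|-|\gamma|}(x,\theta)$ by (G3), ``and one concludes using $\lm(x,\theta)\le\lm(x,y,\theta)$.'' This inference is valid only when $2-|\al|-|\gamma|\ge 0$; for $|\al|+|\gamma|\ge 3$ the exponent is negative and the inequality $\lm(x,\theta)\le\lm(x,y,\theta)$ goes the wrong way. In fact (H2) is \emph{false} in general for $\phi(x,y,\theta)=S(x,\theta)-y\theta$ on all of $\rr^{3n}$: take any $S$ satisfying (G1)--(G3) with some third derivative $\pr_x^\al\pr_\theta^\gamma S(x_0,\theta_0)\neq 0$ (e.g.\ $S(x,\theta)=x\theta+g(x,\theta)$ with $g\in C_0^\infty$ small); then at $(x_0,y,\theta_0)$ the left-hand side of (H2) is a nonzero constant while the right-hand side tends to $0$ as $|y|\to\infty$. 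This is precisely why the paper, immediately after the lemma, proves the symbol estimate only on the region $\Omega_{\phi,\epsilon_0}$ where $\lm(x,y,\theta)\simeq\lm(x,\theta)$. So either the lemma is meant to be read with (H2) restricted to such a region, or its statement is imprecise; in any case your argument for (H2) as written does not go through for $|\al|+|\gamma|\ge 3$.
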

   \begin{Pro} 
   	If $S$ satisfies (G1) and
   	(G3),  then there exists a constant
   	$\epsilon_{0}>0$  such that the phase function
   	$\phi$ given in \eqref{phase_S}
   	belongs to $S_{1}^{\lambda^2}(\Omega _{\phi ,\epsilon _{0}})$ where
   	\begin{equation*}
   	\Omega _{\phi ,\epsilon _{0}}
   	=\big\{ (x,\theta ,y)\in \mathbb{R}
   	^{3n};\;\;|\partial _{\theta }S(x,\theta )
   	-y| ^2<\epsilon _{0}\;(|
   	x| ^2+|y| ^2+|\theta | ^2)\big\} .
   	\end{equation*}
   \end{Pro}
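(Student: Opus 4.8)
\emph{Proof sketch.} The plan is to verify the defining estimates of $S_1^{\lambda^2}(\Omega_{\phi,\epsilon_0})$ directly. Since here the weight is $m=\lambda^2$ (a tempered weight on $\mathbb{R}^{3n}$) and $\rho=1$, what must be shown is that for every $(\alpha,\beta,\gamma)\in\mathbb{N}^n\times\mathbb{N}^n\times\mathbb{N}^n$ there is $C_{\alpha,\beta,\gamma}>0$ with
\[
|\partial_x^\alpha\partial_y^\beta\partial_\theta^\gamma\phi(x,y,\theta)|\le C_{\alpha,\beta,\gamma}\,\lambda(x,y,\theta)^{2-|\alpha|-|\beta|-|\gamma|}\qquad\text{on }\Omega_{\phi,\epsilon_0}.
\]
Smoothness of $\phi(x,y,\theta)=S(x,\theta)-y\theta$ on $\Omega_{\phi,\epsilon_0}$ is immediate from (G1).

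The one substantial ingredient is a comparison of weights on $\Omega_{\phi,\epsilon_0}$: for a suitable choice of $\epsilon_0$, the two weights $\lambda(x,\theta)$ and $\lambda(x,y,\theta)$ are equivalent there. The bound $\lambda(x,\theta)\le\lambda(x,y,\theta)$ is trivial. For the reverse, I would apply (G3) with $\alpha=0$, $|\beta|=1$ to get $|\partial_\theta S(x,\theta)|\le C\lambda(x,\theta)$; then the defining inequality of $\Omega_{\phi,\epsilon_0}$ gives $|y|\le|\partial_\theta S(x,\theta)|+\sqrt{\epsilon_0}\,(|x|+|y|+|\theta|)\le C\lambda(x,\theta)+\sqrt{\epsilon_0}\,(|x|+|y|+|\theta|)$, and provided $\epsilon_0<1$ one absorbs the $\sqrt{\epsilon_0}\,|y|$ term on the left to conclude $|y|\le C'\lambda(x,\theta)$, hence $\lambda(x,y,\theta)\le C''\lambda(x,\theta)$ on $\Omega_{\phi,\epsilon_0}$. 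Any $\epsilon_0\in(0,1)$ works, so this fixes the constant $\epsilon_0$ of the statement.

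With this in hand the estimate follows by a short case analysis, using that $S(x,\theta)$ is handled verbatim by (G3) while $y\theta$ is a quadratic monomial. If $|\beta|\ge1$ then $\partial_y^\beta S=0$, so only $-y\theta$ survives, producing $0$ (when $|\alpha|\ge1$, or $|\beta|\ge2$, or $|\gamma|\ge2$), a constant (when $\alpha=0$, $|\beta|=|\gamma|=1$), or $\pm\theta_j$ (when $\alpha=0$, $|\beta|=1$, $\gamma=0$); each of these is trivially dominated by $C\lambda(x,y,\theta)^{2-|\alpha|-|\beta|-|\gamma|}$, the worst case being order one with $|\theta_j|\le\lambda(x,y,\theta)$. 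If $\beta=0$ then $\partial_x^\alpha\partial_\theta^\gamma\phi=\partial_x^\alpha\partial_\theta^\gamma S$ minus one of $y\theta$, $y_j$, or $0$ (according to whether $\alpha=\gamma=0$, or $\alpha=0$ with $|\gamma|=1$, or otherwise); bounding $|\partial_x^\alpha\partial_\theta^\gamma S|\le C\lambda(x,\theta)^{2-|\alpha|-|\gamma|}$ by (G3), together with $|y\theta|\le\lambda(x,y,\theta)^2$ and $|y_j|\le\lambda(x,y,\theta)$, and converting $\lambda(x,\theta)^{2-|\alpha|-|\gamma|}$ into $\lambda(x,y,\theta)^{2-|\alpha|-|\gamma|}$ by the weight comparison (the trivial direction when the exponent is $\ge0$, the nontrivial direction when it is $<0$), yields the required bound.

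I expect the only real obstacle to be the weight comparison, and it is mild: it amounts to choosing $\epsilon_0<1$ so that the term $\sqrt{\epsilon_0}\,|y|$ arising from the definition of $\Omega_{\phi,\epsilon_0}$ can be absorbed. Note that (G2) is not used here; everything else reduces to elementary differentiation of $S(x,\theta)-y\theta$ combined with (G3).
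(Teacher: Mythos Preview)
Your proof is correct and follows essentially the same route as the paper: a case analysis on $\beta$ for the derivatives of $S(x,\theta)-y\theta$, combined with the key weight comparison $\lambda(x,\theta)\simeq\lambda(x,y,\theta)$ on $\Omega_{\phi,\epsilon_0}$ obtained by bounding $|\partial_\theta S|\le C\lambda(x,\theta)$ via (G3) and absorbing the $\sqrt{\epsilon_0}|y|$ term. The paper phrases the choice of $\epsilon_0$ as ``sufficiently small'' rather than your explicit ``any $\epsilon_0\in(0,1)$'', but the argument is the same, and your observation that (G2) plays no role is also in line with the statement's hypotheses.
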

   
   \begin{proof}
   	We have to show that: There exists $\epsilon _{0}>0$, such that for
   	all $\alpha ,\beta ,\gamma \in \mathbb{N}^n$,
   	there exist $C_{\alpha ,\beta ,\gamma }>0$:
   	\begin{equation}
   	|\partial _{x}^{\alpha }\partial _{y}^{\beta
   	}\partial _{\theta }^{\gamma }\phi (x,y,\theta )| \leq
   	C_{\alpha ,\beta ,\gamma }\lambda (x,y,\theta)^{(2-|
   		\alpha | -|\beta | -|
   		\gamma | )},\quad \forall (x,y,\theta)\in \Omega
   	_{\phi ,\epsilon _{0}}.  \label{3.4}
   	\end{equation}
   	If $|\beta | =1$, then
   	$$
   	|\partial _{x}^{\alpha }\partial _{y }^{\beta }\partial
   	_{\theta}^{\gamma }\phi (x,\theta ,y)| =|\partial
   	_{x}^{\alpha }\partial _{\theta }^{\gamma }(-\theta )
   	| =\begin{cases}
   	0&\text{if }|\alpha | \neq 0 \\
   	|\partial _{\theta }^{\gamma }(-\theta )|&\text{if }
   	\alpha =0;
   	\end{cases}
   	$$
   	If $|\beta | >1$, then $|
   	\partial _{x}^{\alpha }\partial _{y }^{\beta }\partial
   	_{\theta}^{\gamma }\phi (x,y,\theta)| =0$.
   
   	Hence the estimate \eqref{3.4} is satisfied.\\
   	   	If $|\beta | =0$, then for all $\alpha ,\gamma
   	\in \mathbb{N}^n$;
   	$|\alpha |+|\gamma | \leq 2$, there exists $C_{\alpha ,\gamma }>0$
   	such that
   	\begin{equation*}
   	|\partial _{x}^{\alpha }\partial _{\theta }^{\gamma }\phi
   	(x,y,\theta)| =|\partial _{x}^{\alpha
   	}\partial _{\theta }^{\gamma }S(x,\theta )-\partial
   	_{x}^{\alpha }\partial _{\theta }^{\gamma }(y\theta )
   	| \leq C_{\alpha ,\gamma }\lambda (x,y,\theta)^{(2-|\alpha | -|\gamma | )}.
   	\end{equation*}
   	If $|\alpha | +|\gamma | >2$, one has
   	$\partial _{x}^{\alpha }\partial _{\theta }^{\gamma
   	}\phi (x,y,\theta)=\partial _{x}^{\alpha }\partial _{\theta
   }^{\gamma }S(x,\theta )$. In $\Omega _{\phi
   ,\epsilon _{0}}$ we have
\begin{equation*}
|y| =|\partial _{\theta }S(x,\theta )-y
-\partial _{\theta }S(x,\theta )|
\leq \sqrt{\epsilon _{0}}(|x| ^2+|y| ^2
+|\theta | ^2)^{1/2}+C\lambda (x,\theta),
\end{equation*}
with $C>0$.
For $\epsilon _{0}$ sufficiently small, we obtain a constant
$C>0$
such that
\begin{equation}
|y| \leq C\lambda (x,\theta ),\quad
\forall (x,y,\theta)\in \Omega _{\phi ,\epsilon _{0}\;}.
\label{3.5}
\end{equation}
This inequality leads to the equivalence
\begin{equation}
\lambda (x,\theta ,y)\simeq \lambda (x,\theta
)\quad \text{in }\Omega _{\phi ,\epsilon _{0}\;}  \label{3.6}
\end{equation}
thus the assumption $(G3)$ and
\eqref{3.6} give the estimate \eqref{3.4}.
\end{proof}

Using \eqref{3.6}, we have the following result.

\begin{Pro} \label{prop3.5}
	If $(x,\theta )\to a(x,\theta )$
	belongs to $S _{k}^{m}(\mathbb{R}_{x}^n\times
	\mathbb{R}_{\theta }^n)$,  then
	$(x,y,\theta)\to a(x,\theta )$  belongs to
    $S_{k}^{\tilde{m}}(\mathbb{R}_{x}^n\times\mathbb{R}_{y}^n\times \mathbb{R}_{\theta }^n)\cap S _{k}^{\tilde{m}}(\Omega _{\phi
		,\epsilon _{0}\;})$, $k\in \{ 0,1\} $.
\end{Pro}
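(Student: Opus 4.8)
The plan is to reduce the claim to the defining estimates of $S_{k}^{m}(\rr^{n}_{x}\f\rr^{n}_{\theta})$ together with the equivalence \eqref{3.6} obtained above. Here $\tilde m$ stands for the canonical extension of $m$ to $\rr^{3n}$, that is $\tilde m(x,y,\theta):=m(x,\theta)$, and the first (routine) task is to check that $\tilde m$ is still a tempered weight on $\rr^{3n}$. This is immediate: the projection $\pi(x,y,\theta)=(x,\theta)$ is $1$-Lipschitz, hence $|\pi(x_{1},y_{1},\theta_{1})-\pi(x,y,\theta)|\le|(x_{1},y_{1},\theta_{1})-(x,y,\theta)|$, and inserting this into the temperedness inequality for $m$ gives the same inequality for $\tilde m$ with the same constants $C_{0}$ and $l$.

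Next I would differentiate the function $b(x,y,\theta):=a(x,\theta)$. Since $b$ does not depend on $y$, for every $(\al,\beta,\gamma)\in\n^{n}\f\n^{n}\f\n^{n}$ with $\beta\neq0$ we have $\pr_{x}^{\al}\pr_{y}^{\beta}\pr_{\theta}^{\gamma}b\equiv0$, so the required bound holds trivially; and for $\beta=0$ we simply have $\pr_{x}^{\al}\pr_{y}^{\beta}\pr_{\theta}^{\gamma}b=\pr_{x}^{\al}\pr_{\theta}^{\gamma}a$, to which the hypothesis $a\in S_{k}^{m}$ applies:
\[
|\pr_{x}^{\al}\pr_{\theta}^{\gamma}a(x,\theta)|\le C_{\al,\gamma}\,m(x,\theta)\,\lambda(x,\theta)^{-k(|\al|+|\gamma|)}.
\]
In this case $|\beta|=0$, so the exponent demanded by $S_{k}^{\tilde m}$ is again $-k(|\al|+|\gamma|)$, and $\tilde m(x,y,\theta)=m(x,\theta)$; thus everything reduces to comparing $\lambda(x,\theta)$ with $\lambda(x,y,\theta)$ on the relevant set. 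For $k=0$ there is nothing to compare: the bound above reads $|\pr_{x}^{\al}\pr_{\theta}^{\gamma}a|\le C_{\al,\gamma}\tilde m(x,y,\theta)$, which is exactly membership in $S_{0}^{\tilde m}(\rr^{3n})$, hence also in $S_{0}^{\tilde m}(\Omega_{\phi,\epsilon_{0}})$ by restriction.

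For $k=1$ the comparison is precisely \eqref{3.5}--\eqref{3.6}: on $\Omega_{\phi,\epsilon_{0}}$ the constraint $|\pr_{\theta}S(x,\theta)-y|^{2}<\epsilon_{0}(|x|^{2}+|y|^{2}+|\theta|^{2})$ forces $|y|\le C\lambda(x,\theta)$, whence $\lambda(x,y,\theta)\simeq\lambda(x,\theta)$ there; consequently $\lambda(x,\theta)^{-(|\al|+|\gamma|)}\le C'_{\al,\gamma}\,\lambda(x,y,\theta)^{-(|\al|+|\gamma|)}$ on $\Omega_{\phi,\epsilon_{0}}$, and together with $\tilde m=m$ this gives $b\in S_{1}^{\tilde m}(\Omega_{\phi,\epsilon_{0}})$. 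I expect this comparison to be the only delicate point, and it is exactly what makes the open set $\Omega_{\phi,\epsilon_{0}}$ indispensable in the sharp ($\rho=k=1$) part of the statement: on all of $\rr^{3n}$ the quotient $\lambda(x,y,\theta)/\lambda(x,\theta)$ is unbounded (let $|y|\to+\infty$ with $x,\theta$ fixed), so the extra decay $\lambda^{-|\al|-|\gamma|}$ gained from $\rho=1$ cannot be absorbed by $\tilde m$ away from $\Omega_{\phi,\epsilon_{0}}$; all that remains is the elementary bookkeeping of differentiating a function constant in the $y$-variables.
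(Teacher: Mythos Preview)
Your argument is correct and is precisely the one the paper intends: the paper offers no detailed proof at all, only the sentence ``Using \eqref{3.6}, we have the following result,'' and your write-up is simply a careful unpacking of that reference---check that $\tilde m$ is tempered, note that $y$-derivatives vanish, and invoke \eqref{3.6} for the $\rho=1$ decay on $\Omega_{\phi,\epsilon_{0}}$. Your closing remark that the $k=1$ inclusion on the whole of $\mathbb{R}^{3n}$ cannot hold (since $\lambda(x,y,\theta)/\lambda(x,\theta)$ is unbounded as $|y|\to\infty$) is a useful sharpening of the paper's somewhat loose formulation; only on $\Omega_{\phi,\epsilon_{0}}$ does \eqref{3.6} furnish the needed comparison.
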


\section{$L^2$-boundedness and $L^2$-compactness of $F$}	
	The main result is as follows.
	
	\begin{The} \label{thm4.1}
		Let $F$ be the integral operator of distribution kernel
		\begin{equation}
		K(x,y)=\int_{\mathbb{R}^n} e^{i(S(x,\theta )
			-y\theta)}a(x,\theta )\widehat{d\theta }  \label{4.1}
		\end{equation}
		where $\widehat{d\theta }=(2\pi )^{-n}\,d\theta$,
		$a\in S_{k}^{m}(\mathbb{R}_{x,\theta }^{\;2n})$,
		$k=0,1$ and $S$ satisfies $(G1)$, (G2) and (G3).
		Then $FF^{\ast }$  and $F^{\ast }F$ are
		pseudodifferential operators with symbol in
		$ S_{k}^{m^2}(\mathbb{R}^{2n})$, $k=0,1$,
		given by
		\begin{gather*}
			\sigma (FF^{\ast })(x,\partial _{x}S(x,\theta ))
			\equiv |a(x,\theta )| ^2|(\det
			\frac{\partial ^2S}{\partial
				\theta \partial x})^{-1}(x,\theta )| \\
			\sigma (F^{\ast }F)(\partial _{\theta }S(x,\theta ),\theta )
			\equiv |a(x,\theta )| ^2|(\det \frac{\partial ^2S
			}{\partial \theta \partial x})^{-1}(x,\theta )|
		\end{gather*}
		we denote here $a\equiv b$  for
		$a,b\in  S _{k}^{p^2}(\mathbb{R}^{2n})$
		if $(a-b)\in  S _{k}^{p^2\lm^{-2}}(\mathbb{R}^{2n})$
		and $\sigma $ stands for the symbol.
	\end{The}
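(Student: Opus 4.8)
The plan is to compute the Schwartz kernels of $FF^{\ast}$ and $F^{\ast}F$ explicitly, recognize them as kernels of pseudodifferential operators after a change of variables driven by the nondegeneracy hypothesis (G2), and then apply stationary phase to extract the principal symbol. I will treat $FF^{\ast}$ in detail; $F^{\ast}F$ is symmetric. First I would write down the kernel of $F^{\ast}$: since the kernel of $F$ is $K(x,y)=\int e^{i(S(x,\theta)-y\theta)}a(x,\theta)\,\widehat{d\theta}$, the kernel of $F^{\ast}$ is $\overline{K(y,x)}=\int e^{-i(S(y,\eta)-x\eta)}\overline{a(y,\eta)}\,\widehat{d\eta}$. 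Composing, the kernel of $FF^{\ast}$ is
\[
K_{1}(x,z)=\iint e^{i(S(x,\theta)-S(z,\eta))}\Big(\int e^{-iy(\theta-\eta)}\,\widehat{dy}\Big)a(x,\theta)\overline{a(z,\eta)}\,\widehat{d\theta}\,\widehat{d\eta},
\]
and the inner $y$-integral is $\delta(\theta-\eta)$, collapsing one frequency variable. This yields $K_{1}(x,z)=\int e^{i(S(x,\theta)-S(z,\theta))}a(x,\theta)\overline{a(z,\theta)}\,\widehat{d\theta}$ — formally; of course all of this must be justified through the oscillatory-integral machinery of Theorem \ref{existence} and Lemma \ref{lem1}, not by naive Fubini, so the first real task is to make the composition rigorous in that framework (this is where hypotheses (H3)/(H3$^{*}$), transferred from (G1)--(G3) via the Lemma preceding Proposition \ref{prop3.5}, and the estimate \eqref{3.3} are used to control the non-stationary region).

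Next I would put $K_{1}$ into pseudodifferential form. By Taylor's theorem with integral remainder, $S(x,\theta)-S(z,\theta)=(x-z)\cdot\Psi(x,z,\theta)$ where $\Psi(x,z,\theta)=\int_{0}^{1}(\partial_{x}S)(z+t(x-z),\theta)\,dt$, so that $\Psi(x,x,\theta)=(\partial_{x}S)(x,\theta)$. Hypothesis (G2), together with (G3) and the bound \eqref{3.3}-type estimate, guarantees that for $x$ near $z$ the map $\theta\mapsto\Psi(x,z,\theta)$ is a global diffeomorphism with Jacobian bounded below, so I can change variables $\xi=\Psi(x,z,\theta)$, $\theta=\theta(x,z,\xi)$, with $d\theta=\big|\det\frac{\partial\theta}{\partial\xi}\big|\,d\xi$. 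After this substitution $K_{1}(x,z)=\int e^{i(x-z)\cdot\xi}\,b(x,z,\xi)\,\widehat{d\xi}$ with
\[
b(x,z,\xi)=a(x,\theta(x,z,\xi))\,\overline{a(z,\theta(x,z,\xi))}\,\Big|\det\tfrac{\partial\theta}{\partial\xi}(x,z,\xi)\Big|,
\]
which exhibits $FF^{\ast}$ as a pseudodifferential operator with amplitude $b$. Using Proposition \ref{prop3.5} and the symbol calculus of the Proposition in Section 2 (parts $(i)$, $(ii)$, $(iv)$ applied to $\partial^{2}S/\partial\theta\partial x$, whose determinant is bounded away from $0$), one checks $b\in S_{k}^{\tilde m^{2}}$ with $\tilde m(x,z,\xi)\simeq m$ under the equivalence \eqref{3.6}; reducing the amplitude $b(x,z,\xi)$ to a genuine symbol depending only on $(x,\xi)$ is the standard amplitude-reduction step, again justified by an oscillatory-integral/integration-by-parts argument in $(z-x)$.

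Finally, to identify the principal symbol I would invoke the asymptotic formula for amplitude reduction: $\sigma(FF^{\ast})(x,\xi)\equiv\sum_{\alpha}\frac{1}{\alpha!}\partial_{\xi}^{\alpha}D_{z}^{\alpha}b(x,z,\xi)\big|_{z=x}$, whose leading term is $b(x,x,\xi)$, the higher terms lying in $S_{k}^{m^{2}\lm^{-2}}$ by part $(i)$ of the Proposition in Section 2. At $z=x$ the change of variables collapses: $\xi=\Psi(x,x,\theta)=(\partial_{x}S)(x,\theta)$, $\big|\det\frac{\partial\theta}{\partial\xi}\big|=\big|\det\frac{\partial^{2}S}{\partial\theta\partial x}(x,\theta)\big|^{-1}$, and $b(x,x,\xi)=|a(x,\theta)|^{2}\big|\det\frac{\partial^{2}S}{\partial\theta\partial x}(x,\theta)\big|^{-1}$, which is exactly the claimed value $\sigma(FF^{\ast})(x,\partial_{x}S(x,\theta))$. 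The computation for $F^{\ast}F$ is identical with the roles of $x$ and $\theta$ interchanged: there one first does the $\theta$-integral producing a kernel $\int e^{i\theta(\eta-\xi)}\cdots$ — rather, one writes the kernel of $F^{\ast}F$ as an oscillatory integral in $(x,\theta)$, uses \eqref{3.3} to see that $\theta\mapsto\partial_{\theta}S(x,\theta)$ controls the frequency, changes variables accordingly, and reads off the symbol at $(\partial_{\theta}S(x,\theta),\theta)$.

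I expect the main obstacle to be the rigorous justification of the composition and of the two changes of variables: a priori the integrals defining $F$, $F^{\ast}$ and their composite are only oscillatory integrals, so one cannot apply Fubini's theorem or substitute variables directly. One must instead insert the regularizing cutoffs $a_{\sigma}$ of Theorem \ref{existence}, perform all manipulations on the absolutely convergent $a_{\sigma}$-level, obtain uniform-in-$\sigma$ symbol estimates (using that the weights are tempered and that (G2) gives the lower bound needed for the change of variables to be uniformly a diffeomorphism and for $1/\det(\partial^{2}S/\partial\theta\partial x)\in S_{k}^{\lm^{-2}}$ via part $(iv)$), and then pass to the limit $\sigma\to+\infty$. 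The control of the off-diagonal region $|x-z|\gtrsim\lm$ — where the phase $S(x,\theta)-S(z,\theta)=(x-z)\cdot\xi$ has no stationary point — via repeated integration by parts with the operator built from \eqref{3.3} is the technically heaviest part, but it is parallel to the non-stationary estimate already carried out in the proof of Theorem \ref{existence}.
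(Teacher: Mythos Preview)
Your approach is essentially the paper's: compute the kernel of $FF^{\ast}$ as $\int e^{i(S(x,\theta)-S(\tilde x,\theta))}a(x,\theta)\overline{a(\tilde x,\theta)}\,\widehat{d\theta}$, linearize the phase via Taylor, change variables $\theta\mapsto\xi$, and read off the symbol; then treat $F^{\ast}F$ by the $x\leftrightarrow\theta$ symmetry (the paper does this via Fourier conjugation $\mathcal F(F^{\ast}F)\mathcal F^{-1}$).

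There is one technical point where your outline is looser than the paper's and which you should tighten. You write that ``for $x$ near $z$ the map $\theta\mapsto\Psi(x,z,\theta)$ is a global diffeomorphism'' and defer the off-diagonal region to the final paragraph. But the change of variables must be a global diffeomorphism of $\mathbb{R}^{3n}$ for the substitution in the oscillatory integral to make sense, and $\Psi$ alone need not be one when $|x-z|$ is large. The paper resolves this \emph{before} changing variables: it introduces a cutoff $\omega(|x-\tilde x|/\epsilon\lambda)$, splits $b=b_{1,\epsilon}+b_{2,\epsilon}$, shows directly (via integration by parts using \eqref{3.3}) that the far piece $K_{2,\epsilon}\in\mathcal S(\mathbb R^n\times\mathbb R^n)$, and on the near piece replaces your $\Psi=\xi$ by the blended map
\[
\tilde\xi_{\epsilon}(x,\tilde x,\theta)=\omega\!\Big(\tfrac{|x-\tilde x|}{2\epsilon\lambda}\Big)\xi(x,\tilde x,\theta)+\Big(1-\omega\!\Big(\tfrac{|x-\tilde x|}{2\epsilon\lambda}\Big)\Big)(\partial_x S)(\tilde x,\theta),
\]
which coincides with $\xi$ on $\operatorname{supp}b_{1,\epsilon}$ but whose $\theta$-Jacobian is uniformly close to $\partial^2 S/\partial\theta\partial x$ everywhere, so (G2) gives a global lower bound $\det\partial_\theta\tilde\xi_\epsilon\ge\delta_0/2$ for $\epsilon$ small. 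This is the device that turns your ``$x$ near $z$'' remark into a genuine global substitution. Once you insert this splitting and the modified map, your amplitude-reduction step and the identification of the leading term at $z=x$ are exactly what the paper does (it phrases it as ``stationary phase'', but the content is the same).
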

	
	\begin{proof}
		If $u\in \mathcal{S}(\mathbb{R}^n)$, then $Fu(x)$ is given by
		\begin{equation}
		\begin{aligned}
		Fu(x) &=\int_{\mathbb{R}^n}K(x,y) u(y)\,dy \\
		&=\int_{\mathbb{R}^n}\int_{\mathbb{R}^n}e^{i(S(x,\theta )
			-y\theta)}a(x,\theta )u(y)dy\widehat{d\theta }  \\
		&=\int_{\mathbb{R}^n}e^{iS(x,\theta )} a(x,\theta
		)\Big(\int_{\mathbb{R}^n}e^{-iy\theta}u(y)dy\Big)\widehat{d\theta }
		\\
		&=\int_{\mathbb{R}^n}e^{iS(x,\theta )} a(x,\theta ) \mathcal{F}
		u(\theta )\widehat{d\theta }.
		\end{aligned}  \label{4.2}
		\end{equation}
		Here $F$ is a continuous linear mapping from
		$\mathcal{S}(\mathbb{R}^n)$ to
		$\mathcal{S}(\mathbb{R}^n)$ (by Theorem \ref{existence}).
		Let $v\in \mathcal{S}(\mathbb{R}^n)$, then
		\begin{align*}
			\langle Fu,v\rangle_{L^2(\mathbb{R}^n)}
			&=\int_{\mathbb{R}^n}\Big(\int_{
				\mathbb{R}^n}e^{iS(x,\theta )}a(x,\theta )\mathcal{F}u(\theta )
			\widehat{d\theta }\Big) \overline{v(x)}\,dx \\
			&=\int_{\mathbb{R}^n}\mathcal{F}u(\theta )\Big(\int_{\mathbb{R}
				^n}\overline{e^{-iS(x,\theta )} \overline{a(x,\theta )}v(x)\,dx}\Big)
			\widehat{d\theta }
		\end{align*}
		thus
		\begin{equation*}
			\langle Fu(x),v(x)\rangle _{L^2(\mathbb{R}^n)}
			=(2\pi )^{-n}\langle \mathcal{F}u(\theta ),\mathcal{F}((
			F^{\ast }v))(\theta )\rangle_{L^2(\mathbb{R}^n)}
		\end{equation*}
		where
		\begin{equation}
		\mathcal{F}((F^{\ast }v))(\theta)
		=\int_{\mathbb{R}^n}e^{-iS(\widetilde{x},\theta )}\overline{a}
		(\widetilde{x},\theta )
		v(\widetilde{x})d\widetilde{x}. \label{4.3}
		\end{equation}
		Hence, for all $v\in \mathcal{S}(\mathbb{R}^n)$,
		\begin{equation}
		(FF^{\ast }v)(x)=\int_{\mathbb{R}
			^n}\int_{\mathbb{R}^n}e^{i(S(x,\theta
			)-S(\widetilde{x},\theta ))}a(x,\theta )
		\overline{a}(\widetilde{x},\theta )d\widetilde{x}
		\widehat{d\theta }.  \label{4.4}
		\end{equation}
		The main idea to show that $FF^{\ast }$ is a pseudodifferential
		operator, is to use the fact that
		$(S(x,\theta )-S(\widetilde{x},\theta ))$ can be expressed by
		the scalar product
		$\langle x-\widetilde{x},\xi (x,\widetilde{x},\theta )\rangle$ after
		considering the change of variables
		$(x,\widetilde{x},\theta)\to (x,\widetilde{x},\xi
		=\xi (x,\widetilde{x},\theta ))$.
		
		The distribution kernel of $FF^{\ast }$ is
		\begin{equation*}
			K(x,\tilde{x})=\int_{\mathbb{R}^n}e^{i(
				S(x,\theta )-S(\tilde{x},\theta ))}a(x,\theta
			)\overline{a}(\tilde{x},\theta )\widehat{d\theta }.
		\end{equation*}
		We obtain from \eqref{3.3} that if
		$|x-\widetilde{x}| \geq \frac{\epsilon
		}{2}\lambda (x,\widetilde{x},\theta )$
		(where $\epsilon >0$ is sufficiently small)
		then
		\begin{equation}
		|(\partial _{\theta }S)(x,\theta )-(
		\partial _{\theta }S)(\widetilde{x},\theta )
		| \geq \frac{\epsilon }{2C}\lambda (
		x,\widetilde{x},\theta ). \label{4.5}
		\end{equation}
		Choosing $\omega \in C^{\infty }(\mathbb{R})$ such
		that
		\begin{gather*}
			\omega (x)\geq 0, \quad \forall x\in \mathbb{R}   \\
			\omega (x)=1 \quad  \text{if }  x\in [ -\frac{1}{2},\frac{1}{2}] \\
			\mathop{\rm supp}\omega \subset  ] -1,1[
		\end{gather*}
		and setting
		\begin{gather*}
			b(x,\tilde{x},\theta ):= a(x,\theta )\overline{a}
			(\tilde{x},\theta )=b_{1,\epsilon }(
			x,\tilde{x},\theta )+b_{2,\epsilon }(x,\tilde{x},\theta )
			\\
			b_{1,\epsilon }(x,\tilde{x},\theta )=\omega (\frac{
				|x-\tilde{x}| }{\epsilon \lambda (x,\tilde{x}
				,\theta )})b(x,\tilde{x},\theta )
			\\
			b_{2,\epsilon }(x,\tilde{x},\theta )=[
			1-\omega (\frac{|x-\tilde{x}| }{\epsilon \lambda (x,
				\tilde{x},\theta )})] b(x,\tilde{x},\theta ).
		\end{gather*}
		We have $K(x,\widetilde{x})=K_{1,\epsilon }(x,
		\widetilde{x})+K_{2,\epsilon }(
		x,\widetilde{x})$, where
		\begin{equation*}
			K_{j,\epsilon }(x,\tilde{x})=\int_{\mathbb{R}
				^n}e^{i(S(x,\theta )-S(\tilde{x},\theta ))
			}b_{j,\epsilon }(x,\tilde{x},\theta )
			\widehat{d\theta },\quad j=1,2.
		\end{equation*}
		We will study separately the kernels $K_{1,\epsilon }$ and
		$K_{2,\epsilon } $.
		
		On the support of $b_{2,\epsilon }$, inequality \eqref{4.5}
		is satisfied and we have
		\begin{equation*}
			K_{2,\epsilon }(x,\widetilde{x})\in
			\mathcal{S}(\mathbb{R}^n\times \mathbb{R}^n).
		\end{equation*}
		Indeed, using the oscillatory integral method, there is a linear
		partial differential operator $L$ of order 1 such that
		\begin{equation*}
			L\big(e^{i(S(x,\theta )-S(\tilde{x},\theta ))
			}\big)=e^{i(S(x,\theta )-S(\tilde{x},\theta ))}
		\end{equation*}
		where
		\begin{equation*}
			L=-i|(\partial _{\theta }S)
			(x,\theta )-(\partial _{\theta }S)(
			\widetilde{x},\theta )|
			^{-2}\sum_{l=1}^n\left[ (\partial _{\theta
				_{l}}S)(x,\theta )-(\partial _{\theta _{l}}S)
			(\widetilde{x},\theta )\right] \partial _{\theta_{l}}.
		\end{equation*}
		The transpose operator of $L$ is
		\begin{equation*}
			^{t}L=\sum_{l=1}^nF_{l}(x,\widetilde{x},\theta)
			\partial _{\theta _{l}}+G(x,\widetilde{x},\theta )
		\end{equation*}
		where $F_{l}(x,\widetilde{x},\theta )\in  S
		_{0}^{\lambda^{-1}}(\Omega _{\epsilon })$,
		$G(x,\widetilde{x},\theta )\in  S _{0}^{\lambda^{-2}}(\Omega
		_{\epsilon })$,
		\begin{gather*}
			F_{l}(x,\widetilde{x},\theta )=i|(
			\partial _{\theta }S)(x,\theta )-(\partial _{\theta
			}S)(\widetilde{x},\theta )|
			^{-2}((\partial _{\theta _{l}}S)(x,\theta
			)-(\partial _{\theta _{l}}S)
			(\widetilde{x},\theta )),
			\\
			G(x,\widetilde{x},\theta )
			=i\sum_{l=1}^n\partial _{\theta _{l}}\left[ |
			(\partial _{\theta }S)(x,\theta )-(\partial
			_{\theta }S)(\widetilde{x},\theta )|
			^{-2}((\partial _{\theta _{l}}S)(x,\theta)-(\partial _{\theta _{l}}S)(
			\widetilde{x},\theta )
			)\right],
			\\
			\Omega _{\epsilon }=\big\{ (x,\tilde{x},\theta )\in
			\mathbb{R}^{3n}:|\partial _{\theta }S(x,\theta )-\partial
			_{\theta}S(\tilde{x},\theta )| >\frac{\epsilon }{2C}
			\lambda (x,\tilde{x},\theta )\big\} .
		\end{gather*}
		On the other hand we prove by induction on $q$ that
		\begin{equation*}
			(^{t}L)^{q}b_{2,\epsilon }(x,\tilde{x},\theta )
			=\sum_{|\gamma | \leq q ,\, \gamma \in \mathbb{N}^n}
			g_{\gamma ,q}(x,\tilde{x},\theta )\partial
			_{\theta }^{\gamma }b_{2,\epsilon }(x,\tilde{x},\theta ),
			\text{ }g_{\gamma }^{(q)}\in S _{0}^{\lambda^{-q}}(
			\Omega _{\epsilon }),
		\end{equation*}
		and so,
		\begin{equation*}
			K_{2,\epsilon }(x,\tilde{x})=\int_{\mathbb{R}
				^n}e^{i(S(x,\theta )-S(\tilde{x},\theta ))}(
			^{t}L)^{q}b_{2,\epsilon }(x,\tilde{x},\theta
			)\widehat{d\theta }.
		\end{equation*}
		
		Using Leibniz's formula, (G3) and the form
		$(^{t}L)^{q}$, we can choose $q$ large enough such that
		for all $\alpha ,\alpha ',\beta ,\beta '\in \mathbb{N}
		^n,\exists C_{\alpha ,\alpha ',\beta ,\beta '}>0$,
		\begin{equation*}
			\sup_{x,\widetilde{x}\in \mathbb{R}^n}|x^{\alpha }\widetilde{x}
			^{\alpha '}\partial _{x}^{\beta }\partial
			_{\widetilde{x}}^{\beta '}K_{2,\epsilon }(
			x,\widetilde{x})| \leq C_{\alpha ,\alpha ',\beta ,\beta '}.
		\end{equation*}
		
		Next, we study $K_{1}^{\epsilon }$: this is more difficult and
		depends on the choice of the parameter $\epsilon $. It follows
		from Taylor's formula that
		\begin{gather*}
			S(x,\theta )-S(\widetilde{x},\theta )
			=\langle x- \widetilde{x},\xi (x,\widetilde{x},\theta )
			\rangle_{\mathbb{R}^n} ,\\
			\xi (x,\widetilde{x},\theta )
			=\int_{0}^{1}(\partial _{x}S)(\widetilde{x}+t(x-\widetilde{x})
			,\theta )dt.
		\end{gather*}
		We define the vectorial function
		\begin{equation*}
			\widetilde{\xi }_{\epsilon }(x,\widetilde{x},\theta
			)=\omega \big(\frac{|x-\tilde{x}|
			}{2\epsilon \lambda (x,\tilde{x},\theta )}\big)
			\xi (x,\widetilde{x},\theta
			)+\big(1-\omega (\frac{|x-\tilde{x}| }{
				2\epsilon \lambda (x,\tilde{x},\theta )})
			\big)(\partial _{x}S)(\widetilde{x},\theta).
		\end{equation*}
		We have
		\begin{equation*}
			\widetilde{\xi }_{\epsilon }(x,\widetilde{x},\theta
			)=\xi (x,\widetilde{x},\theta )\text{ on }
			\mathop{\rm supp} b_{1,\epsilon }.
		\end{equation*}
		Moreover, for $\epsilon $ sufficiently small,
		\begin{equation}
		\lambda (x,\theta )\simeq \lambda ( \widetilde{x},\theta )\simeq
		\lambda ( x,\widetilde{x},\theta )\text{ on } \mathop{\rm supp}
		b_{1,\epsilon }. \label{4.6}
		\end{equation}
		Let us consider the mapping
		\begin{equation}
		\mathbb{R}^{3n}\ni (x,\widetilde{x},\theta )
		\to (x,\widetilde{x},\widetilde{\xi }_{\epsilon
		}(x,\widetilde{x},\theta ))\label{4.7}
		\end{equation}
		for which Jacobian matrix is
		\begin{equation*}
			\begin{pmatrix}
				I_{n} & 0 & 0 \\
				0 & I_{n} & 0 \\
				\partial _{x}\widetilde{\xi }_{\epsilon } & \partial _{\widetilde{x}}
				\widetilde{\xi }_{\epsilon } & \partial _{\theta }\widetilde{\xi }
				_{\epsilon }
			\end{pmatrix}.
		\end{equation*}
		We have
		\begin{align*}
			&\frac{\partial \widetilde{\xi }_{\epsilon ,j}}{\partial \theta _{i}}
			(x,\widetilde{x},\theta )\\
			&=\frac{\partial ^2S}{\partial \theta
				_{i}\partial x_{j}}(\widetilde{x},\theta )
			+\omega \big(\frac{
				|x-\tilde{x}| }{2\epsilon \lambda (x,\tilde{x}
				,\theta )}\big)\big(\frac{\partial \xi _{j}}{\partial \theta _{i}
			}(x,\widetilde{x},\theta )-\frac{\partial
			^2S}{\partial
			\theta _{i}\partial x_{j}}(\widetilde{x},\theta )\big)
		\\
		&\quad -\frac{|x-\tilde{x}| }{2\epsilon \lambda (x,
			\tilde{x},\theta )}\frac{\partial \lambda }{\partial \theta _{i}}
		(x,\tilde{x},\theta )\lambda ^{-1}(
		x,\tilde{x},\theta
		)\omega '\big(\frac{|x-\tilde{x}| }{
			2\epsilon \lambda (x,\tilde{x},\theta )}\big)
		\big(\xi_{j}(x,\widetilde{x},\theta )-\frac{\partial S}{\partial x_{j}}
		(\widetilde{x},\theta )\big).
	\end{align*}
	Thus, we obtain
	\begin{align*}
		&\big|\frac{\partial \widetilde{\xi }_{\epsilon
				,j}}{\partial \theta _{i}}(x,\widetilde{x},\theta )
		-\frac{\partial ^2S}{\partial \theta _{i}\partial x_{j}}(
		\widetilde{x},\theta )\big|\\
		&\leq \big|\omega (\frac{|x-\tilde{x}| }{
			2\epsilon \lambda (x,\tilde{x},\theta )})\big|
		\big|\frac{\partial \xi _{j}}{\partial \theta _{i}}(x,\widetilde{
			x},\theta )-\frac{\partial ^2S}{\partial \theta _{i}\partial x_{j}}
		(\widetilde{x},\theta )\big| \\
		&\quad + \lambda ^{-1}(x,\tilde{x},\theta )\big|\omega'(\frac{|x-\tilde{x}|
		}{2\epsilon \lambda (x,\tilde{x},\theta )})
		\big| \big|\xi_{j}(x,\widetilde{x},\theta )
		-\frac{\partial S}{\partial x_{j}}
		(\widetilde{x},\theta )\big| .
	\end{align*}
	Now it follows from (G3), \eqref{4.6}
	and Taylor's formula that
	\begin{equation}
	\begin{aligned}
	\big|\frac{\partial \xi _{j}}{\partial \theta _{i}}(x,\widetilde{
		x},\theta )-\frac{\partial ^2S}{\partial \theta _{i}\partial x_{j}}
	(\widetilde{x},\theta )\big|
	&\leq \int_{0}^{1}\big|\frac{\partial ^2S}{\partial
		\theta _{i}\partial x_{j}}(\widetilde{x}+t(
	x-\widetilde{x})
	,\theta )-\frac{\partial ^2S}{\partial \theta _{i}\partial x_{j}}
	(\widetilde{x},\theta )\big| dt
	\\
	&\leq C|x-\widetilde{x}| \lambda ^{-1}(x,
	\tilde{x},\theta ),\quad C>0
	\end{aligned} \label{4.8}
	\end{equation}
	
	\begin{equation}
	\begin{aligned}
	\big|\xi _{j}(x,\widetilde{x},\theta )-\frac{\partial S}{
		\partial x_{j}}(\widetilde{x},\theta )\big|
	&\leq \int_{0}^{1}\big|\frac{\partial S}{\partial
		x_{j}}(\widetilde{x}+t(x-\widetilde{x}),\theta
	)-\frac{\partial S}{\partial x_{j}}(
	\widetilde{x},\theta )\big| dt
	\\
	&\leq C|x-\widetilde{x}| ,\quad C>0\,.\label{4.9}
	\end{aligned}
	\end{equation}
	From \eqref{4.8} and \eqref{4.9},
	there exists a positive constant $C>0$ such that
	\begin{equation}
	|\frac{\partial \widetilde{\xi }_{\epsilon
			,j}}{\partial \theta _{i}}(x,\widetilde{x},\theta )
	-\frac{\partial ^2S}{\partial \theta _{i}\partial x_{j}}(
	\widetilde{x},\theta )| \leq C\epsilon
	,\quad \forall i,j\in \{ 1,\dots ,n\} . \label{4.10}
	\end{equation}
	If $\epsilon <\frac{\delta _{0}}{2\widetilde{C}}$, then
	\eqref{4.10} and (G2) yields the estimate
	\begin{equation}
	\delta _{0}/2\leq -\widetilde{C}\epsilon +\delta _{0}\leq -\widetilde{C}
	\epsilon +\det \frac{\partial ^2S}{\partial x\partial \theta
	}(x,\theta
	)\leq \det \partial _{\theta }\widetilde{\xi }_{\epsilon }(x,
	\widetilde{x},\theta ),
	\label{4.11}
	\end{equation}
	with $\widetilde{C}>0$.
	If $\epsilon $ is such that \eqref{4.6} and \eqref{4.11} hold,
	then the mapping given in \eqref{4.7} is a global diffeomorphism of
	$\mathbb{R}^{3n}$. Hence there exists a
	mapping
	\begin{equation*}
		\theta :\mathbb{R}^n\times \mathbb{R}^n\times \mathbb{R}^n\ni (x,
		\widetilde{x},\xi )\to \theta (
		x,\widetilde{x},\xi )\in \mathbb{R}^n
	\end{equation*}
	such that
	\begin{equation}
	\begin{gathered}
	\widetilde{\xi }_{\epsilon }(x,\widetilde{x},\theta (x,
	\widetilde{x},\xi ))=  \xi   \\
	\theta (x,\widetilde{x},\widetilde{\xi }_{\epsilon }(x,
	\widetilde{x},\theta ))=  \theta   \\
	\partial ^{\alpha }\theta (x,\widetilde{x},\xi )=\mathcal{O}
	(1), \quad  \forall \alpha \in
	\mathbb{N}^{3n}\backslash \{0\}
	\end{gathered} \label{4.12}
	\end{equation}
	If we change the variable $\xi $ by
	$\theta (x,\widetilde{x},\xi)$ in $K_{1,\epsilon }(x,\widetilde{x})$,
	we obtain
	\begin{equation}
	K_{1,\epsilon }(x,\widetilde{x})=\int_{\mathbb{R}
		^n}e^{i\langle x-\tilde{x},\xi \rangle}b_{1,\epsilon }(
	x,\tilde{x},\theta (x,\widetilde{x},\xi ))
	\big|\det \frac{\partial \theta }{\partial \xi }(
	x,\widetilde{x},\xi )\big| \widehat{d\xi }.
	\label{4.13}
	\end{equation}
	From \eqref{4.12} we have, for $k=0,1$, that
	$b_{1,\epsilon }(x,\tilde{x},\theta (
	x,\widetilde{x},\xi ))|\det \frac{\partial
		\theta }{\partial \xi }(x,\widetilde{x},\xi )| $ belongs to
	$S _{k}^{m^2}(\mathbb{R}^{3n})$ if
	$a\in  S _{k}^{m}(\mathbb{R}^{2n})$.
	
	Applying the stationary phase theorem (c.f. \cite{Robert} ) to
	\ref{4.13}, we obtain the expression of the symbol of the
	pseudodifferential
	operator $FF^{\ast }$,
	\begin{equation*}
		\sigma (FF^{\ast })=b_{1,\epsilon }(x,\tilde{x},\theta (x,
		\widetilde{x},\xi ))\big|\det \frac{\partial \theta }{
			\partial \xi }(x,\widetilde{x},\xi )\big|
		_{|\widetilde{x}=x }+R(x,\xi )
	\end{equation*}
	where $R(x,\xi )$ belongs to
	$ S _{k}^{m^2\lm^{-2}}(\mathbb{R}^{2n})$ if
	$a\in  S _{k}^{m}(\mathbb{R}^{2n})$, $k=0,1$.
	
	For $\tilde{x}=x$, we have
	$b_{1,\epsilon }(x,\tilde{x},\theta (x,\widetilde{x},\xi ))
	=|a(x,\theta (x,x,\xi ))| ^2$ where $\theta (x,x,\xi )$ is the
	inverse of the mapping
	$\theta \to \partial_{x}S(x,\theta )=\xi $. Thus
	\begin{equation*}
		\sigma (FF^{\ast })(x,\partial _{x}S(x,\theta )
		)\equiv |a(x,\theta )|
		^2\big|\det \frac{\partial ^2S}{\partial \theta \partial
			x}(x,\theta )\big| ^{-1}.
	\end{equation*}
	From \eqref{4.2} and \eqref{4.3}, we
	obtain the expression of $F^{\ast }F$:
	$\forall v\in \mathcal{S}(\mathbb{R}^n)$,
	\begin{align*}
		(\mathcal{F(}F^{\ast }F)\mathcal{F}^{-1})v(
		\theta )
		&=\int_{\mathbb{R}^n}e^{-iS(x,\theta )}\overline{a}
		(x,\theta )(F(\mathcal{F}^{-1}v))(x)dx \\
		&=\int_{\mathbb{R}^n}e^{-iS(x,\theta )}\overline{a}
		(x,\theta )\Big(\int_{\mathbb{R}^n}e^{iS(x,\widetilde{\theta }
			)}a(x,\widetilde{\theta })(\mathcal{F}(\mathcal{F}
		^{-1}v))(\widetilde{\theta })
		\widehat{d\widetilde{\theta }}\Big)dx\newline
		\\
		&=\int_{\mathbb{R}^n}\int_{\mathbb{R}^n}e^{-i(
			S(x,\theta )-S(x,\tilde{\theta}))\;}
		\overline{a}(x,\theta )\;a(x,\widetilde{\theta })v(\tilde{
			\theta})\widehat{d\widetilde{\theta }}dx\text{.}
	\end{align*}
	Hence the distribution kernel of the integral operator
	$\mathcal{F(}F^{\ast }F)\mathcal{F}^{-1}$ is
	\begin{equation*}
		\widetilde{K}(\theta ,\widetilde{\theta })
		=\int_{\mathbb{R}^n}e^{-i(S(x,\theta )-S(x,\tilde{\theta})
			)}\overline{a}(x,\theta ) a(x,\tilde{\theta})\widehat{dx}.
	\end{equation*}
	We remark that we can deduce
	$\widetilde{K}(\theta ,\widetilde{\theta })$ from
	$K(x,\widetilde{x})$ by replacing $x$ by $\theta $. On the other
	hand, all
	assumptions used here are symmetrical on $x$ and $\theta $; therefore,
	$\mathcal{F(}F^{\ast }F)\mathcal{F}^{-1}$ is a nice
	pseudodifferential
	operator with symbol
	\begin{equation*}
		\sigma (\mathcal{F(}F^{\ast }F)\mathcal{F}^{-1})(\theta
		,-\partial _{\theta }S(x,\theta ))\equiv |
		a(x,\theta )|
		^2\big|\det \frac{\partial ^2S}{\partial x\partial \theta }
		(x,\theta )\big| ^{-1}.
	\end{equation*}
	Thus the symbol of $F^{\ast }F$ is given by (c.f. \cite{Hor-Weyl})
	\begin{equation*}
		\sigma (F^{\ast }F)(\partial _{\theta }S(x,\theta ),\theta )\equiv
		|a(x,\theta )| ^2\big|\det \frac{\partial ^2S}{
			\partial x\partial \theta }(x,\theta )\big| ^{-1}.
	\end{equation*}
\end{proof}

\begin{corollary} \label{coro4.2}
	Let $F$ be the integral operator with the
	distribution kernel
	\begin{equation*}
		K(x,y)=\int_{\mathbb{R}^n} e^{i(S(x,\theta )-y\theta)}
		a(x,\theta )\widehat{d\theta }
	\end{equation*}
	where $a\in  S _{0}^{m}(\mathbb{R}_{x,\theta }^{2n})$
	and $S$  satisfies (G1), (G2) and (G3).
	Then, we have:
	\begin{enumerate}
		\item For any bounded tempered weight $m$, $F$ can be
		extended as a bounded linear mapping on $L^2(\mathbb{R}^n)$
		\item  For any $m$ such that
		$\underset{|x|+|\theta|\rightarrow\infty}\lim m(x,\theta)=0$, $F$ can be extended as a compact operator on
		$L^2(\mathbb{R}^n)$.
	\end{enumerate}
\end{corollary}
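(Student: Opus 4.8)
The plan is to deduce the corollary from Theorem \ref{thm4.1}, which tells us that $F^{*}F$ (equivalently $FF^{*}$) is a pseudodifferential operator with symbol $p:=\sigma(F^{*}F)\in S_{0}^{m^{2}}(\rr^{2n})$, together with the $L^{2}$-boundedness and $L^{2}$-compactness theory for pseudodifferential operators in the weighted classes $S_{0}^{\bullet}$. The bridge between $F$ and $F^{*}F$ is the elementary identity
\[
\|Fu\|_{L^{2}}^{2}=\langle F^{*}Fu,u\rangle_{L^{2}},\qquad u\in\s(\rn),
\]
which is meaningful because, by Theorem \ref{existence}, $F$ maps $\s(\rn)$ continuously into itself; all estimates will first be proved for $u\in\s(\rn)$ and then extended by density of $\s(\rn)$ in $L^{2}(\rn)$.

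For part (1) I would argue as follows. If $m$ is bounded, then $m^{2}\le\|m\|_{\infty}^{2}$ and hence $|\pr^{\al}p(x,\xi)|\le C_{\al}\|m\|_{\infty}^{2}\,\lm(x,\xi)^{-|\al|}$ for every $\al$; thus $p$ is a symbol of order $0$ whose Calder\'on--Vaillancourt seminorms are $O(\|m\|_{\infty}^{2})$. The classical $L^{2}$-boundedness of such pseudodifferential operators then gives $\|F^{*}F\|_{\mathcal{L}(L^{2})}<\infty$, so $\|Fu\|_{L^{2}}\le\|F^{*}F\|_{\mathcal{L}(L^{2})}^{1/2}\|u\|_{L^{2}}$ for $u\in\s(\rn)$, and $F$ extends to a bounded operator on $L^{2}(\rn)$.

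For part (2), assume $m(x,\theta)\to0$ as $|x|+|\theta|\to\infty$, so also $m^{2}\to0$. Fix $\chi\in C_{0}^{\infty}(\rr^{2n})$ with $\chi\equiv1$ near $0$ and split, for $R\ge1$, $p=\chi(\cdot/R)\,p+(1-\chi(\cdot/R))\,p=:p_{R}+q_{R}$. The operator $\mathrm{Op}(p_{R})$ has a compactly supported (hence Schwartz) distribution kernel, so it is Hilbert--Schmidt and in particular compact. For $q_{R}$, using the symbol estimates for $p$ together with $|\pr^{\beta}(1-\chi(\cdot/R))|\le C_{\beta}\lm^{-|\beta|}$ on the region $\{|x|+|\xi|\gtrsim R\}$ where this cutoff is not identically $1$, one checks $|\pr^{\al}q_{R}|\le C_{\al}\,M_{R}^{2}\,\lm^{-|\al|}$ with $C_{\al}$ independent of $R$ and $M_{R}:=\sup_{|x|+|\theta|\ge cR}m\to0$; hence $\|\mathrm{Op}(q_{R})\|_{\mathcal{L}(L^{2})}\le C\,M_{R}^{2}\to0$. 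Consequently $F^{*}F$ is a norm-limit of compact operators, hence compact, and then $F$ is compact too: if $u_{k}\rightharpoonup0$ in $L^{2}$ then $\|Fu_{k}\|_{L^{2}}^{2}=\langle F^{*}Fu_{k},u_{k}\rangle\to0$. (Equivalently, use the polar decomposition $F=U(F^{*}F)^{1/2}$, a bounded operator times a compact one.)

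The only non-routine ingredients are the two standard facts about pseudodifferential operators in the weighted calculus that I invoke as black boxes — the Calder\'on--Vaillancourt type bound, in which $\|\mathrm{Op}(p)\|_{\mathcal{L}(L^{2})}$ is controlled by finitely many of the constants $C_{\al}$ of the symbol (so it goes to $0$ with $\sup|p|$ when the order is $0$), and the Hilbert--Schmidt property of operators with compactly supported symbols; both are available in \cite{Robert,Me-Se,Hor-Weyl}. I expect the one point genuinely requiring care to be the uniform-in-$R$ control of the symbol seminorms of $q_{R}$ in the compactness step: the rescaled cutoff $\chi(\cdot/R)$ must be arranged so that it does not spoil the decay supplied by the hypothesis $m\to0$, which is precisely why $1-\chi(\cdot/R)$ is estimated on the set where $m$ is already small.
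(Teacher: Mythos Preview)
Your argument is correct and follows the same overall strategy as the paper: reduce to $F^{*}F$ via Theorem~\ref{thm4.1}, invoke Calder\'on--Vaillancourt for boundedness, and deduce compactness of $F$ from compactness of $F^{*}F$. One small slip: since $a\in S_{0}^{m}$ (that is, $\rho=0$), the symbol $p=\sigma(F^{*}F)\in S_{0}^{m^{2}}$ satisfies only $|\partial^{\alpha}p|\le C_{\alpha}\,m^{2}$ with \emph{no} extra factor $\lambda^{-|\alpha|}$ from differentiation, and likewise for $q_{R}$. This is harmless, because Calder\'on--Vaillancourt in the $S^{0}_{0,0}$ setting needs only uniform bounds on finitely many derivatives, which you do have.

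Where you differ from the paper is in part~(2). The paper simply cites the compactness criterion for pseudodifferential operators with symbol tending to $0$ at infinity (from \cite{Robert}) as a black box, whereas you sketch its proof via the decomposition $p=p_{R}+q_{R}$ into a Hilbert--Schmidt piece plus a piece of small operator norm. For the passage from compactness of $F^{*}F$ to compactness of $F$, the paper argues by finite-rank approximation through an orthonormal basis $(\varphi_{j})$, estimating $\|F\psi-\sum_{j\le n}\langle\varphi_{j},\psi\rangle F\varphi_{j}\|^{2}$ by the product of the corresponding expressions for $F^{*}F$ and the identity; you instead use the weak-to-strong characterisation $\|Fu_{k}\|^{2}=\langle F^{*}Fu_{k},u_{k}\rangle\to 0$, or equivalently the polar decomposition $F=U(F^{*}F)^{1/2}$. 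Both routes are valid and standard; yours is a bit more direct, while the paper's makes the finite-rank approximation of $F$ explicit.
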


\begin{proof}
	It follows from Theorem \ref{thm4.1} that $F^{\ast }F$ is a
	pseudodifferential operator with symbol in
	$ S _{0}^{m^2}(\mathbb{R}^{2n})$.
	
	\noindent (1) Since $m$ is bounded, we can apply the Cald\'{e}ron-Vaillancourt theorem
	(see \cite{Cal-Vai}) for
	$F^{\ast }F$ and obtain the existence of a positive constant
	$\gamma (n)$ and an integer $k(n)$ such that
	\begin{equation*}
		\|(F^{\ast }F)\;u\|_{L^2(\mathbb{R}^n)}\leq
		\gamma (n)\;Q_{k(n)}(\sigma (FF^{\ast }))
		\|u\|_{L^2(\mathbb{R}^n)},\quad \forall
		u\in \mathcal{S}(\mathbb{R}^n)
	\end{equation*}
	where
	\begin{equation*}
		Q_{k(n)}(\sigma (FF^{\ast }))
		=\sum_{|\alpha | +|\beta | \leq k(n)}\sup_{(x,\theta )\in
			\mathbb{R}^{2n}} \big|\partial _{x}^{\alpha }\partial
		_{\theta }^{\beta }\sigma (FF^{\ast })(\partial _{\theta
		}S(x,\theta ),\theta )\big|
	\end{equation*}
	Hence,  for all $u\in \mathcal{S}(\mathbb{R}^n)$,
	\begin{equation*}
		\|Fu\|_{L^2(\mathbb{R}^n)}\leq \|
		F^{\ast }F\|_{_{\mathcal{L}(
				L^2(\mathbb{R}^n))}}^{1/2}\|u\|
		_{L^2(\mathbb{R}^n)}\leq (\gamma (n)\;Q_{k(
			n)}(\sigma (FF^{\ast })))
		^{1/2}\|u\|_{L^2(\mathbb{R}^n)}.
	\end{equation*}
	Thus $F$ is also a bounded linear operator on
	$L^2(\mathbb{R}^n)$.
	
	\noindent (2) If $\underset{|x|+|\theta|\rightarrow\infty}\lim m(x,\theta)=0$, the compactness theorem
	(see \cite{Robert}) shows that the operator $F^{\ast }F$ can be
	extended as a compact operator on $L^2(\mathbb{R}^n)$.
	Thus, the Fourier integral operator
	$F$ is compact on $L^2(\mathbb{R}^n)$.
	Indeed, let $(\varphi _{j})_{j\in \mathbb{N}}$ be an orthonormal basis of
	$L^2(\mathbb{R}^n)$, then
	\begin{equation*}
		\|F^{\ast }F-\sum_{j=1}^n \langle \varphi_{j},.\rangle F^{\ast }
		F\varphi _{j}\| \to 0 \quad \text{as } n\to +\infty.
	\end{equation*}
	Since $F$ is bounded, for all $\psi \in L^2(\mathbb{R}^n)$,
	\[
	\big\|F\psi -\sum_{j=1}^n \langle \varphi_{j},\psi
	\rangle F\varphi _{j}\big\|^2
	\leq \big\|F^{\ast }F\psi -\sum_{j=1}^n \langle \varphi
	_{j},\psi \rangle F^{\ast }F\varphi _{j}\big\|
	\big\|\psi -\sum_{j=1}^n \langle \varphi _{j},\psi \rangle
	\varphi _{j}\big\|,
	\]
	it follows that
	\begin{equation*}
		\|F-\sum_{j=1}^n \langle \varphi
		_{j},.\rangle F\varphi _{j}\| \to 0 \quad \text{as } n\to +\infty
	\end{equation*}
\end{proof}

\begin{example} \label{exa4.3} 
	We consider the function given by
	\begin{equation*}
		S(x,\theta )=\sum_{|\alpha| +|\beta | =2,\,
			\alpha ,\beta \in \mathbb{N}^n}
		C_{\alpha ,\beta }x^{\alpha }\theta ^{\beta },\quad
		\text{for }(x,\theta )\in \mathbb{R}^{2n}
	\end{equation*}
	where $C_{\alpha ,\beta }$ are real constants. This function satisfies
	(G1), (G2) and (G3).  
\end{example}
\bibliography{bib}
\nocite{FIOI,Me-Se,Cal-Vai,shubin,Hor-Weyl,Se-h-adim}

\end{document}